\documentclass{amsart}
\addtolength{\textwidth}{1,7cm}
\addtolength{\hoffset}{-1cm}
\usepackage{amssymb,latexsym}
\usepackage{footnote}
\usepackage{color}
\usepackage{longtable}
\usepackage{booktabs}
\usepackage{hyperref}

\usepackage{todonotes}
\usepackage{soul}
\setstcolor{red}

\newcommand{\pmmtodo}[1]{\todo[color=orange!30,bordercolor=orange,size=\small]{PMM: {#1}}}

\theoremstyle{plain}
\newtheorem{theorem}{Theorem}
\newtheorem{corollary}[theorem]{Corollary}
\newtheorem{proposition}[theorem]{Proposition}

\newtheorem{lemma}[theorem]{Lemma}
\newtheorem{question}[theorem]{Question}

\theoremstyle{definition}
\newtheorem{definition}[theorem]{Definition}
\newtheorem{example}[theorem]{Example}
\newtheorem{notation}[theorem]{Notation}
\newtheorem{remark}[theorem]{Remark}

    \def\ptk{[l]}
    \def\Zfl{Z_{F,l}}
    \def\SV{\bar{S}}
    \def\SW{\SV_{\mathit{l}}}
    \def\SVdual{\bar{T}}
    \def\SWdual{\SVdual_{\mathit{l}}}

    \def\Spec{\operatorname{Spec}}
    \def\Cactus{\operatorname{Cactus}}

\newcommand{\degF}{\ensuremath{d}}
\newcommand{\df}{\ensuremath{\mathrm{Diff}}}
\newcommand{\divp}{\ensuremath{\mathsf{dp}}}
\newcommand{\h}[1]{\-\mbox{-#1}}
\newcommand{\KK}{K}
\newcommand{\PP}{\ensuremath{\mathbb{P}}}

\newcommand{\ZGl}{Z_{G, x_0}}

\DeclareMathOperator{\linearOp}{Lin}
\DeclareMathOperator{\ord}{ord}
\DeclareMathOperator{\ls}{\textsc{ls}}
\newcommand{\linear}[2]{\linearOp(#1)^{#2}}%
\newcommand{\Ver}[1]{V_{#1}}%

\title[]{On polynomials with given Hilbert function and applications} 

\author[A. Bernardi, J. Jelisiejew, P. Macias Marques, K. Ranestad]{Alessandra Bernardi, Joachim Jelisiejew, Pedro Macias Marques, Kristian Ranestad}

\address{Dipartimento di Matematica, Universit\`{a} di Trento, via Sommarive 14, I-38123 Povo (Trento), Italy.}
\email{alessandra.bernardi@unitn.it}

\address{Faculty of Mathematics, Informatics, and Mechanics, University of
Warsaw, Banacha 2, 02-097 Warszawa, Poland}
\email{jjelisiejew@mimuw.edu.pl}

\address{Departamento de Matem\'{a}tica, Escola de Ci\^{e}ncias e Tecnologia, Centro de Investiga\c{c}\~{a}o em Matem\'{a}tica e Aplica\c{c}\~{o}es, Instituto de Investiga\c{c}\~{a}o e Forma\c{c}\~{a}o Avan\c{c}ada, Universidade de \'{E}vora, Rua Rom\~{a}o Ramalho, 59, P--7000--671 \'{E}vora, Portugal}
\email{pmm@uevora.pt}

\address{Matematisk institutt, Universitetet i Oslo, PO Box 1053, Blindern, NO-0316 Oslo, Norway}
\email{ranestad@math.uio.no}

\subjclass[2010]{Primary 13H10, Secondary 14Q15, 14C05}

\keywords{cactus rank, Artinian Gorenstein local algebra}

\hyphenation{Gorenstein}

\begin{document}

\begin{abstract}
    Using Macaulay's correspondence we study the family of Artinian Gorenstein local
    algebras with fixed symmetric Hilbert function decomposition.
    As an application we give a new lower bound for cactus varieties of the third
    Veronese embedding. We discuss the case of cubic surfaces, where
interesting phenomena occur.
\end{abstract}

\maketitle

\section*{Introduction}

\bigskip
 Macaulay established a correspondence between polynomials and Artinian local Gorenstein algebras.  In particular, any polynomial is a dual socle generator of an Artinian local Gorenstein algebra.  In this paper we interpret the Hilbert function of the algebra as a Hilbert function of the corresponding polynomial, and give a description of the set of polynomials with given symmetric Hilbert function decomposition, in a fixed polynomial ring. We consider polynomials $f$ in a divided power ring ${S=\KK_{\divp}[x_1,\ldots,x_n]}$, and a polynomial ring ${T=\KK[y_1,\ldots ,y_n]}$ acting on $S$ by contraction (see Section~\ref{nota}).  The Artinian local Gorenstein algebra $A$ associated to $f\in S$ is the quotient  $T/f^{\bot}$ where $f^{\bot}$ is the annihilator ideal of $f$. Thus $\Spec(T/f^{\bot})\subset \Spec(T)$ is a local Gorenstein scheme supported at the origin of the affine space $\Spec(T)$.

The application we have in mind is that of apolarity and the dimension of cactus varieties of cubic forms.  Cactus varieties are generalizations of secant varieties.
  
\begin{definition}
Let $X\subset \PP^N$  be a projective variety.  The $r$-th cactus variety $\Cactus_r(X)$ of $X$ is the closure of the union of the linear spaces spanned by length $r$ subschemes on $X$.
\end{definition}
Here we abuse  slightly the notation of variety, since the cactus variety is
often a reducible algebraic set. We are interested in the case when $X
    \simeq \mathbb{P}^n$ is embedded into $\mathbb{P}^{\binom{n+3}{3}-1}$ by
the third Veronese embedding.

Consider, like above, a divided power ring ${\SV=\KK_{\divp}[x_0,\ldots ,x_n]}$, and a polynomial ring ${\SVdual=\KK[y_0,\ldots ,y_n]}$ acting on $\SV$ by contraction.
A cubic form $F\in \SV_3$ up to multiplication
by scalars is a point in $\PP(\SV_3)$.  The pure cubes form a $n$-dimensional
subvariety ${\Ver{3,n}\subset \PP(\SV_3)}$. The least length of a subscheme $Z\subset \Ver{3,n}$ whose linear span contains $F$ is called the \emph{cactus rank} of $F$.  The closure of the set of cubic forms with cactus rank $r$ is the $r$-th cactus variety of  $\Ver{3,n}$, denoted $\Cactus_r(\Ver{3,n})$. Via the contraction action, $\SVdual$ is the natural homogeneous coordinate ring on $\PP(\SV_1)$, and a $Z\subset \Ver{3,n}$ contains $F$ in its span, if and only if its homogeneous ideal $I_Z\subset \SVdual$ is contained in $F^{\bot}$. This classical fact is called the \emph{apolarity lemma} and is the motivation for defining a subscheme $Z\subset \PP(\SV_1)$ \emph{apolar} to $F$ if $I_Z\subset F^{\bot}$.

We apply Macaulay's correspondence to investigate local
Gorenstein schemes  that are apolar to $F$.  Our main result is the
following lower bound
on the dimension of cactus varieties of cubic forms.
\begin{theorem}[Corollary~\ref{TheCorollary}]
    Let $n\geq 8$ and $18\leq r\leq 2n+2$ and let $V_{3,n}$ be the third Veronese embedding of $\PP^n$ in $\PP^N$, with ${N=\tbinom{n+3}{3}}$. Then
    \[\dim  \Cactus_{r}(\Ver{3,n})\geq
\begin{cases}
(rn+r-1)+\frac{r(r-2)(r-16)}{48}-1 &\text{if } r\geq 18\; \text{even},\\
(rn+r-1)+\frac{(r-1)(r-3)(r-17)}{48}-2&\text{if } r\geq 19\; \text{odd}.\\
\end{cases}
\]
Hence, under these assumptions the $r$-th secant variety of $V_{3,n}$ is strictly contained in the $r$-th cactus variety of $V_{3,n}$.
\end{theorem}
The $r$-th secant variety of $V_{3,n}$ is ${\sigma_r(\Ver{3,n})=\overline{\bigcup_{P_1,\ldots,P_r\in V_{3,n}}\langle P_1,\ldots,P_r \rangle}}$. The fact that the inclusion of $\sigma_r(\Ver{3,n})$ in  $\Cactus_{r}(\Ver{3,n})$ is strict is a consequence of the inequality
\[\dim\sigma_r(\Ver{3,n})\le r n+r-1.\]
The righ\h{hand} side is the expected dimension of the secant variety, and it is an easy parameter count that gives an upper bound. The actual dimension of of the secant variety is known thanks to the Alexander and Hirschowitz Theorem \cite{AH}.

For $r \geq2n+2$ the variety $\Cactus_{r}(\Ver{3, n})$
fills the ambient space, see \cite{BR}. Observe that ${\sigma_r(\Ver{3,n})=\Cactus_{r}(\Ver{3,n})}$, for ${r\le13}$, see \cite{CN11} for the cases ${r\le11}$, and \cite{CJN} for the remaining cases.

\medskip
 
The link between the Artinian local Gorenstein algebras and apolar schemes to a homogeneous form $F\in\SV$ is provided by the fact that if ${f=F(1,x_1,\ldots,x_n)}$, then 
\[\Spec(K[y_1,\ldots,y_n]/f^{\bot})\subset \PP(\SV_1)\] 
is a local scheme supported at $[1:0:\cdots:0]$ that is apolar to $F$. We define 

\begin{definition}\label{LocalCactusRank}  The minimal length of a local apolar scheme to $F$ is called the \emph{local cactus rank} of $F$.
\end{definition}
That link is strengthened by the following result, where we call the sum of
the homogeneous terms of a polynomial $g$ of degree at most $\degF$ the
\emph{degree-$\degF$ tail} of $g$. 
\begin{proposition}\label{tail}
    Let $F$ be a homogeneous polynomial of degree $\degF$, and let
    ${f=F(1,x_1,\ldots,x_n)}$.
    Let $\Gamma$ be a zero\h{dimensional} scheme of minimal length among local schemes supported at
    ${[l] = [1:0:\cdots : 0]}$ that are apolar to $F$.  Then $\Gamma$ is the affine apolar
    scheme of a polynomial $g$ whose degree-$\degF$ tail equals $f$.
\end{proposition}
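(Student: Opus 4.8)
The starting point is the apolarity correspondence: a local Artinian Gorenstein scheme supported at the point $[l]=[1:0:\cdots:0]$ is, by Macaulay's theory of inverse systems, the affine apolar scheme $Z_g$ of a polynomial $g$ in the variables dual to $x_1,\ldots,x_n$, well-defined up to the choice of a representative in the inverse system (i.e.\ $g$ is defined modulo adding a polynomial of higher degree that is already in the apolar ideal, but its ``relevant'' part, governing the scheme, is what matters). The plan is to relate apolarity of the homogeneous $F$ (as a form in $x_0,\ldots,x_n$) with apolarity of $f=F(1,x_1,\ldots,x_n)$ (as an inhomogeneous polynomial in $x_1,\ldots,x_n$), via the standard dehomogenization/homogenization dictionary. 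Concretely, a local scheme $\Gamma$ supported at $[l]$ sits in the affine chart $x_0\ne 0$, and one should check that $\Gamma$ is apolar to $F$ (in the homogeneous sense) if and only if the corresponding affine scheme in that chart is apolar to $f$ (in the affine sense). This is essentially a translation between $\KK[x_0,\ldots,x_n]$ acting on degree-$\degF$ forms and $\KK[x_1,\ldots,x_n]$ acting on polynomials of degree $\le \degF$, where the top-degree part of the inverse system recovers $f$.

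First I would set up notation carefully: write $R=\KK[y_0,\ldots,y_n]$ for the dual ring of differential operators acting on $S=\KK[x_0,\ldots,x_n]$, and $R'=\KK[y_1,\ldots,y_n]$ acting on $S'=\KK[x_1,\ldots,x_n]$. A local scheme at $[l]$ corresponds to an ideal in the localization $\KK[y_1/y_0,\ldots,y_n/y_0]$, equivalently, after the substitution $y_0\mapsto 1$, to an ideal $I\subseteq R'$. Next I would identify, for a polynomial $g=g_0+\cdots+g_e$ with $g_{\le \degF}=f$, the affine apolar scheme $Z_g=\Spec(R'/\mathrm{Ann}(g))$ and show it is apolar to $F$: the key computation is that the homogenization of a differential operator annihilating $F$, when dehomogenized and applied to $g$, kills $g$ because the top part $f$ of $g$ is already controlled by the annihilator of $F$, and the lower-degree parts $g_0,\ldots,g_{\degF-1}$ can be adjusted freely without affecting the scheme (they lie in the ``tail'' ambiguity of the inverse system). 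Conversely, given $\Gamma$ of minimal length apolar to $F$ and supported at $[l]$, I would produce a polynomial $g$ representing it via inverse systems and argue that its degree-$\degF$ tail must equal $f$ up to a harmless correction, then normalize; minimality is used to rule out the degenerate possibility that the tail has degree $<\degF$ or fails to recover $f$ exactly (if $g_{\le\degF}\ne f$ one would get either a non-apolar scheme or a contradiction with the length).

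The main obstacle, I expect, is the bookkeeping in the homogenization step: one must be precise about how the dual variable $y_0$ interacts with degrees, i.e.\ that a differential operator $D\in R_k$ annihilating the degree-$\degF$ form $F$ corresponds, after setting $y_0=1$, to an inhomogeneous operator in $R'$ of order $\le k$ annihilating $f$ and conversely, and that this correspondence is compatible with passing to the local ring at $[l]$. In particular one has to verify that the annihilator ideal of $g$ in $R'$, when re-homogenized using $y_0$, sits inside $\mathrm{Ann}_R(F)$ saturated at $y_0$, and that the length of $Z_g$ equals the length of the local component of $\Gamma$ at $[l]$ — this is where the hypothesis that $\Gamma$ is of \emph{minimal} length among such schemes does the real work, forcing the tail to be exactly $f$ rather than some proper ``truncation''. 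Once this dictionary is in place, the conclusion that $\deg_{\le \degF} g = f$ is essentially immediate.
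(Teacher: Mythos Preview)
Your proposal has the right overall shape but contains a reversed implication and, more importantly, misses the key mechanism that actually produces a $g$ with the correct tail.

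First, the reversed implication: to show that $Z_g$ is apolar to $F$ you need that operators annihilating $g$ homogenize to operators annihilating $F$, not the other way around. Your sentence ``the homogenization of a differential operator annihilating $F$, when dehomogenized and applied to $g$, kills $g$'' is the converse, and it is false in general (e.g.\ $y_0\in F^{\perp}$ never, but $\pi^*(y_0)=1$ certainly does not kill $g$; more substantively, an operator in $F^\perp$ of degree $>d$ need not annihilate $g$ when $\deg g>d$). The paper isolates the correct direction as Corollary~\ref{ref:localzeromeansglobalzero:cor}: if $\psi(f)=0$ then $\Psi(F)=0$. Also, your remark that ``the lower-degree parts $g_0,\ldots,g_{d-1}$ can be adjusted freely without affecting the scheme'' is not right; those parts are exactly the tail $f$ and they very much determine $Z_g$.

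Second, and this is the real gap: given a minimal $\Gamma$, Macaulay duality gives you \emph{some} $g$ with $\Gamma=Z_g$, but there is no reason its degree-$d$ tail is $f$. What you actually know (via Lemma~\ref{ref:reduction:lem}) is that $F=\Psi(G)$ for some homogeneous $\Psi$, where $G$ is a homogenization of $g$. Dehomogenizing, Lemma~\ref{ref:localtoglobal:lem} tells you that the degree-$d$ tail of $\psi(g)$ equals $f$ --- so it is the \emph{derivative} $\psi(g)$, not $g$ itself, that has the right tail. The proof then observes $Z_{\psi(g)}\subseteq Z_g=\Gamma$, checks that $Z_{\psi(g)}$ is still apolar to $F$ (again using Corollary~\ref{ref:localzeromeansglobalzero:cor}), and invokes minimality to conclude $Z_{\psi(g)}=\Gamma$. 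Your phrase ``normalize'' and ``harmless correction'' does not capture this: you cannot simply adjust $g$ to have tail $f$ while preserving $Z_g$; you must pass to a partial derivative and use minimality to show nothing was lost.
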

A particularly important problem is to find  the cactus rank of a general form in $\SV_\degF$, i.e.\ the minimal $r$ such that ${\Cactus_r(\Ver{\degF,n})=\PP(\SV_\degF)}$.
Our results do not improve previous known bounds, so this remains a major open problem in this theory. We refer the interested reader to \cite{IK}, \cite{BR}, \cite{BB}, and \cite{BBM}.

\medskip

Now, the first step in order to be able to compute the cactus rank, is to describe the structure of minimal apolar schemes. If we start by considering a minimal zero\h{dimensional} scheme $\Gamma$ apolar to a form $F$ and decompose it as ${\Gamma=\Gamma_1\cup\cdots\cup\Gamma_k}$, where each $\Gamma_i$ is a scheme supported on one point, then we take the corresponding decomposition ${F=F_1+ \cdots + F_k}$, where $\Gamma_i$ is a minimal local scheme apolar to $F_i$ for ${i=1, \ldots , k}$. According to Proposition~\ref{tail}, one would like to find invariants for
local apolar Gorenstein schemes, parameterizing the degree-$d$ tails of all
polynomials that define a scheme with given invariant. Iarrobino's analysis
(\cite{I}), that we recall in the first section, provides one such discrete
invariant: the symmetric Hilbert function decomposition. Then, if one wants to
estimate the dimension of the sets of polynomials with the same local cactus
rank, one needs to understand the structure of the polynomials with the same
symmetric Hilbert function decomposition. This is what we do in Section \ref{parampol}
by the use of standard and exotic forms, that explain the unlucky behavior that
the number of variables involved in each homogeneous summand of a given
polynomial $f$ may be larger than what is expected from the Hilbert function
and its symmetric decomposition. Motivated by this, we describe the family ${\mathcal{F}^m_f}$ of polynomials $g$ whose linear partials and Hilbert function coincide with those of $f$, and such that ${g-f}$ is part of an exotic summand for $g$. In Proposition~\ref{blindexoticdescription} we show that the polynomials in ${\mathcal{F}^m_f}$ define isomorphic apolar algebras and compute in Corollary~\ref{blindexoticdimension} its dimension.  In Proposition~\ref{fstandardexotic} we give a decomposition of any polynomial as a sum of a polynomial in standard form and an exotic summand. In Section \ref{apolarschemes} we focus on the local cactus rank by proving Proposition \ref{tail} and computing the local cactus rank of a general cubic surface. Finally in Section \ref{cactusdim} we use this description in order to estimate the dimension of the set of all the polynomials with the same symmetric Hilbert function decomposition, which allows us to estimate the dimension of the Cactus variety. In particular, we prove our lower bound for this dimension in Corollary~\ref{TheCorollary}.

\subsection{Notations}\label{nota}
For main applications  we consider homogeneous forms in $\SV={\KK_{\divp}}[x_0,\ldots ,x_n]$ and their dehomogenization in $S={\KK_{\divp}}[x_1,\ldots ,x_n]$:
\[
\pi_{x_0}:\SV\to S;\quad F(x_0,x_1,...,x_n)\mapsto F(1,x_1,...,x_n).
\] 

We consider the action of the polynomial ring ${\SVdual:= {\KK}[y_0,\ldots ,y_n]}$ on $\SV$ by contraction: if
${\alpha=(\alpha_0,\ldots,\alpha_n)}$ and
${\beta=(\beta_0,\ldots,\beta_n)}$ are multi\h{indices}, then
\[
y^\alpha\big(x^{\beta}\big)=\begin{cases}
x^{\beta-\alpha} &\text{if } \beta\ge\alpha,\\
0 &\text{otherwise.}
\end{cases}
\]
Similarly, we consider the action of the polynomial ring $T:= {\KK}[y_1,\ldots ,y_n]$ on $S$: 
$$T\times S\to S;\quad (\psi, f)\mapsto \psi(f)$$ 
 defined by this contraction restricted to  $y^\alpha,x^\beta$, when  $\alpha_0=\beta_0=0$.

Note that we are using the same notation for ordinary powers in $T$ and divided powers in $S$, unlike what is usually done in the literature (for properties of divided power rings see for instance \cite[Appendix A]{IK}, where the divided power $x^\beta$ would be written as $x^{[\beta]}$).

In characteristic $0$ we could have used ordinary differentiation, and
therefore, by abuse of language, we call ${\psi(f)\in S}$ a \emph{partial} of a polynomial ${f\in S}$ for any ${\psi\in T}$.

\section{Preliminaries}\label{Preliminaries}
\label{polgiv}

We begin this section by presenting the Macaulay correspondence between polynomials and Artinian Gorenstein local rings, which is the starting point of our theory.
\subsection{Macaulay correspondence}

Let $\KK$ be an algebraically closed field of characteristic other than $2,
3$ and consider the divided power
ring ${S:= {\KK}_{\divp}[x_1,\ldots,x_n]}$. Consider the action of the
polynomial ring ${T:= {\KK}[y_1,\ldots ,y_n]}$ on $S$ defined in Subsection \ref{nota}.

Let $S_1$ and $T_1$ be the degree\h{one} parts of $S$ and $T$ respectively. With respect to the action above (classically known as \emph{apolarity}), $S_{1}$ and $T_{1}$ are natural dual spaces and ${\langle x_{1},\ldots,x_{n}\rangle}$ and ${\langle y_{1},\ldots,y_{n}\rangle}$ are dual
bases. The annihilator of a polynomial $f$ of degree $\degF$ is an ideal in  $T$ which we denote by ${f^{\bot}\subset T}$. The quotient ${T_{f}:=T/f^{\bot}}$ is a local Artinian Gorenstein ring (see e.g.\ \cite[Lemma 2.14]{IK}).  In fact, $T_f$  is finitely generated as a \mbox{$\KK$-mod}\-ule  so it is \emph{Artinian}.  The image of $T_1$ in $T_f$ generates the unique maximal ideal $\mathfrak{m}$, so $T_f$ is local.  Furthermore $T_{f}$ has a \mbox{$1$-di}\-men\-sional socle, the annihilator of the maximal ideal, namely $\mathfrak{m}^{\degF}$, so $T_f$ is \emph{Gorenstein}.  In addition, if $f$ is a form, $f^{\bot}$ is a homogeneous ideal and therefore $T_f$ is an Artinian Gorenstein \emph{graded} local ring.

\subsection{Symmetric decomposition of the Hilbert function of a polynomial}
We consider now a polynomial ${f\in S=\KK_{\divp}[x_1,\ldots,x_n]}$ and let ${f^{\bot}\subset T=\KK[y_1,\ldots,y_n]}$ be its annihilator with respect to contraction. We shall interpret a Hilbert function for the local Artinian Gorenstein quotient $T_f=T/f^{\bot}$  in terms of a filtration of the space of partials of the polynomial $f$.  In particular we recall and interpret Iarrobino's
analysis of Hilbert functions on associated graded algebras of $T_f$ and their symmetric decomposition.  We will apply this analysis in the next section to characterize the polynomials with a given Hilbert function.

\medskip

The local Artinian Gorenstein quotient ring ${T_f=T/f^{\bot}}$
is naturally isomorphic to
\[
\df(f)=\{\psi(f)\mid \psi\in T\}
\]
 the space of all partials of $f$, as a $T$-module.

Following \cite{I}, we consider Hilbert functions on graded rings associated to two filtrations of $T_f$. Let $\mathfrak{m}$ be the maximal ideal of $T_f$. The $\mathfrak{m}$-adic filtration
 \[
T_f=\mathfrak{m}^0\supset \mathfrak{m}\supset \mathfrak{m}^2\supset \cdots \supset \mathfrak{m}^\degF\supset \mathfrak{m}^{\degF +1}=0
\]
where $\degF=\deg f$, defines an associated graded ring
\[
T^*_f=\oplus_{i=0}^d \mathfrak{m}^{i}/\mathfrak{m}^{i+1}
\] 
whose Hilbert function  we denote by $H_f$. The L\"oewy filtration
\[
T_f=(0:\mathfrak{m}^{\degF +1})\supset (0:\mathfrak{m}^{\degF})\supset\cdots \supset (0: \mathfrak{m}^2) \supset (0:\mathfrak{m})\supset 0
\]
induces the following sequence of ideals of  $T_f^*$: For each ${a=0,1,2,\ldots}$ let
 \[
C_a=\oplus_{i=0}^{\degF-a}C_{a,i}=\oplus_{i=0}^{\degF-a}\left((0:\mathfrak{m}^{\degF+1-a-i})\cap \mathfrak{m}^{i}\right)/\left((0:\mathfrak{m}^{\degF+1-a-i})\cap \mathfrak{m}^{i+1}\right)\subset T_f^*,
 \]
and consider the $T_f^*$-modules
\[
Q_a=C_a/C_{a+1},\quad a=0,1,2,\ldots
\]
and their respective Hilbert functions
\[
\Delta_{Q,a}=H\big(Q_a\big).
\]
Each $Q_a$ decomposes as a sum  ${Q_a=\oplus_{i=0}^{\degF}Q_{a,i}}$, where ${Q_{a,i}=C_{a,i}/C_{a+1,i}}$. We can check from the definitions that for ${\degF\ge2}$ and any ${a>\degF-2}$, we have ${C_a=Q_a=0}$. The following is an important result on the structure of these modules.
\begin{proposition}\label{symmetry}\cite[Theorem 1.5]{I}  
The $T_f^*$-modules $Q_a,\; a=0,1,2,\ldots ,$  satisfy the following reflexivity condition
 \[
 Q^{\vee}_{a,i}\cong Q_{a,\degF - a-i}.
 \]
In particular, the Hilbert function $ \Delta_{Q,a}=H\big(Q_a\big)$ is symmetric about $(\degF-a)/2$, and thus the Hilbert function $H(T_f^*)$ of $T_f^*$ has a symmetric decomposition
\[
H(T_f^*)=\sum_a \Delta_{Q,a}.
\]
\end{proposition}

The possible symmetric decompositions of the Hilbert function is  restricted by the fact that the partial sums of the symmetric decomposition are Hilbert functions of suitable  quotients of  $T_f^*$.
\begin{corollary}\cite[Section 5B, p.\ 69]{I}\label{quot}
The Hilbert function of $T_f^*/C_{\alpha+1}$ satisfies
\[
H(T_f^*/C_{\alpha+1})=\sum_{a=0}^{\alpha}\Delta_{Q,a}.
\]
In particular every partial sum  $\sum_{a=0}^{\alpha}\Delta_{Q,a}$ is the Hilbert function of a $\KK$-algebra generated in degree $1$.
\end{corollary}

Iarrobino has listed all possible symmetric decompositions of Hilbert functions of rings $T_f^*$ with ${\dim T_f^*\leq 16}$, see \cite[Section 5]{I}.

We now interpret the ideal $C_a$ and the module $Q_a$ in terms of the space $\df(f)$ of partials of $f$.  This interpretation depends on the isomorphism
\[
\tau: T/f^\bot\to \df(f), \quad \psi\mapsto \psi(f),
\]
of $T$-modules and thus {$\KK$-vector} spaces. Let $ \df(f)_{i}$ be the subspace of $\df(f)$ of partials of degree at most $i$. The image of $(0:\mathfrak{m}^i)$ under the map $\tau$ is precisely $\df(f)_{i-1}$, so the L\"{o}ewy filtration
     \[
     (0:\mathfrak{m})\subset(0:\mathfrak{m}^{2})\subset(0:\mathfrak{m}^{3})\subset\cdots \subset(0:\mathfrak{m}^{\degF}) \subset(0:\mathfrak{m}^{\degF+1}) =T_f
     \]
      of $T_f$ is mapped to the degree filtration
      \[
\KK=\df(f)_{0}\subset \df(f)_{1}\subset \df(f)_{2}\subset \cdots \subset \df(f)_{\degF}= \df(f)
\]
 of $\df(f)$. Now  
\[(0:\mathfrak{m}^i)/(0:\mathfrak{m}^{i-1})\cong (\mathfrak{m}^{i-1}/\mathfrak{m}^i)^{\vee},\]
    so the integral function
     \[
    H_f(0)=1, \quad H_{f}(i)=\dim_{\KK}\df(f)_{i}-\dim_{\KK}\df(f)_{i-1}, \quad i=1,\ldots,\degF,
     \]
    coincides with the Hilbert function of $T_f^*$: 
    \[
   H_f(i)=H(T_f^*)(i).
   \]
     On the other hand, the $\mathfrak{m}$-adic filtration
     \[
    T_f \supset \mathfrak{m}\supset \mathfrak{m}^{2}\supset\cdots \supset \mathfrak{m}^{\degF} \supset \mathfrak{m}^{\degF+1} =0
     \]
corresponds to an order filtration on $\df(f)$.
     We call the \emph{order} of $\psi\in T$  the smallest degree of a
     non\h{zero} homogeneous term of $\psi$, and denote it by $\mathrm{ord}(\psi)$.  We call the \emph{order} of 
     a partial $f'$ of $f$ the largest
     order of a ${\psi\in T}$ such that ${f'=\psi (f)}$.   Thus the
 image
 \[
 \tau(\mathfrak{m}^i)\subseteq \df(f)_{\degF-i}
 \]
  is simply the space of partials of order at least $i$ of $f$.

 The  isomorphism  $Q^{\vee}_{a,i}\cong Q_{a,\degF - a-i}$ allows us to interpret the vector space
 $ Q^{\vee}_{a,i}$ as parameterizing partials of $f$ of degree $i$ and order $\degF-a-i$, modulo partials of lower degree and larger order.

 More precisely, let
 $\df(f)_i^a\subset \df(f)$ be the subspace of partials of degree at most $i$ and order at least $\degF-i-a$, then
 \[
  Q^{\vee}_{a,i}\cong \df(f)_i^a/\big(\df(f)_{i-1}^a+\df(f)_i^{a-1}\big).
 \]

So
\begin{equation}\label{valueofdecomposition}
\Delta_{f,a}(i)=\dim_{\KK}\big(Q^{\vee}_{a,i}\big)=\dim_{\KK}\left({\df}(f)_i^a/({\df}(f)_{i-1}^a+{\df}(f)_i^{a-1})\right).
\end{equation}

\begin{notation}
We denote by $\Delta_f$ the symmetric decomposition $H_f=\sum_a \Delta_{f,a}$ of the Hilbert function $H_f$.
\end{notation}

Consider the space of linear forms that are partials of $f$,
\[
{\linearOp(f):=\df(f)_1 \cap S_1},
\]
and the linear subspaces 
\[
    {\linear{f}{a}= \{l\in S_1 \mid l \text{ is a partial of } f
    \text{ of order at least } \degF-a-1 \}} = \df(f)_1^a \cap S_1.
\]
We easily see that for each ${a\ge0}$, we have an isomorphism  
${\linear{f}{a}\simeq {\df}(f)_1^a/{\df}(f)_{0}}$, so
${\Delta_{f,a}(1)=\dim_{\KK} \linear{f}{a} - \dim_{\KK} \linear{f}{a-1}}$. We
obtain a canonical flag of subspaces of $S_1$:
\[
    \linear{f}{0} \subseteq \linear{f}{1} \subseteq  \cdots \subseteq
    \linear{f}{d-2}= \linearOp(f) \subseteq 
    S_1.
\]

\begin{example}\label{partialsofapolynomial}
Let ${f=x_1^{\:3}x_2+x_3^{\:3}+x_4^{\:2}}$. Its space of partials is generated by the elements in the following table, where the generators of each $Q_a^\vee$ are arranged by degree; next to it, we have the symmetric decomposition of its Hilbert function:

\medskip

\begin{center}
\newcommand{\oldarraystretch}{\arraystretch} 
\renewcommand{\arraystretch}{1.2}
\begin{tabular}{c @{\hspace{4em}} c}
    Generators of the space of partials & Hilbert function decomposition \\[2mm]
\begin{tabular}{ l ccccc }
degree & $0$ &$1$ & $2$ & $3$ & $4$ \\
\hline
$Q_0^\vee$ & $1$ & $x_1$,\ $x_2$  & $x_1^{\:2}$,\ $x_1x_2$ & $x_1^{\:3}$,\ $x_1^{\:2}x_2$ & $f$\\
$Q_1^\vee$ & & $x_3$ & $x_3^{\:2}$ & & \\
$Q_2^\vee$ & & $x_4$ & & & \\
& &  &  &  & \\
\end{tabular}
&
$\begin{array}{ccccccc}
    \mbox{degree} && 0 &1 & 2 & 3 & 4 \\
\hline
\Delta_{f,0}& = & 1 & 2 & 2 & 2 & 1 \\
\Delta_{f,1}& = & 0 & 1 & 1 & 0 & 0 \\
\Delta_{f,2}& = & 0 & 1 & 0 & 0 & 0\\
\hline
H_f            & = & 1 & 4 & 3 & 2 & 1\\
\end{array}$
\end{tabular}
\renewcommand{\arraystretch}{\oldarraystretch}
\end{center}
For instance $x_3^{\:2}$ is a partial of order $1$, since it is obtained as ${y_3(f)=x_3^{\:2}}$ and cannot be attained by a higher order element of $T$, so it is a generator of $Q_{1,2}^\vee$. Here we have ${\linear{f}{0}=\langle x_1,x_2\rangle}$, ${\linear{f}{1}=\langle x_1,x_2,x_3\rangle}$, and ${\linear{f}{2}=\langle x_1,x_2,x_3,x_4\rangle}$.
\end{example}

In the next section we shall enumerate polynomials $f$ with a given Hilbert function $H_f$ using this symmetric decomposition $\Delta_f$. For this purpose we denote a Hilbert  function $H$  by its values
\[
   H= \big(H(0),H(1),\ldots,H(\degF)\big)
   \]
   and the decomposition $\Delta_f$,
   \[
   H=\sum_a \Delta_a,
   \]
   by its summands
   \[
       \Delta=(\Delta_0,\ldots,\Delta_{\degF-2}),
   \]
   where each $\Delta_a$ is symmetric around $(\degF-a)/2$, i.e.\ $\Delta_a(i)=\Delta_a(\degF-a-i)$.

  By Corollary \ref{quot}, both $H$ and each partial sum
  \[
  \Delta_{\leq \alpha}=\sum_{a=0}^\alpha \Delta_a
  \]
   are Hilbert functions of  $\KK$-algebras generated in degree $1$, so there are some immediate restrictions on these functions.
  First, Hilbert functions $H$ and   $\Delta_{\leq \alpha}$ have positive
  values and satisfy the Macaulay growth condition
  (cf.\ \cite{M}): If the $i$-binomial expansion of $\Delta_{\leq \alpha}(i)$ is
   \[
\Delta_{\leq \alpha}(i)=\binom{m_i}{i}+\binom{m_{i-1}}{i-1}+\cdots +\binom{m_j}{j}; \quad m_i>m_{i-1}>\cdots>m_j\geq j\geq 1,
   \]
    then
\begin{equation}\label{macaulay}
   \Delta_{\leq \alpha}(i+1)\leq\binom{m_i+1}{i+1}+\binom{m_{i-1}+1}{i}+\cdots+\binom{m_j+1}{j+1}.
 \end{equation}

\begin{example}\label{ref:Hilbdecomposition:example}
For $H(1)=8$, $H(2)\geq 5$ and $1+H(1)+\cdots+H(\degF-1)+ 1=17$ the possible Hilbert functions $H$ and their decompositions $H=\sum_{i}\Delta_{i}$ that satisfy the Macaulay growth conditions are the following:
\[
\begin{array}{cccccc}
H&= &1 &  8&   7 &  1 \\
\Delta_{0}&=   &1 &  7&   7&   1\\
\Delta_{1}&=   &   0  & 1 &  0&   \\
\end{array},
\quad
\begin{array}{ccccccc}
H&= &1 &  8&   6 &  1 &  1\\
\Delta_{0}&=   &1 &  1&   1&  1 &   1\\
\Delta_{1}&=   &   0  & 5 &  5&   0&   \\
\Delta_{2}&=   &   0  & 2&   0&    &
\end{array},
\quad
\begin{array}{cccccccc}
H&= &1 &  8&   5 &  1 &  1 & 1\\
\Delta_{0}&=   &1 &  1&   1&   1&   1& 1\\
\Delta_{1}&=   &   0  & 0 &  0&   0&   0& \\
\Delta_{2}&=   &   0  & 4 &  4&   0&   & \\
\Delta_{3}&=   &   0  & 3&   0&    &  &
\end{array}
\]\[
\begin{array}{ccccccc}
H&= &1 &  8&   5 &  2 &  1\\
\Delta_{0}&=   &1 &  2&   3&   2&   1\\
\Delta_{1}&=   &   0  & 2 & 2&   0&   \\
\Delta_{2}&=   &   0  & 4&   0&   &
\end{array},\quad
\begin{array}{ccccccc}
H&= &1 &  8&   5 &  2 &  1\\
\Delta_{0}&=   &1 &  2&   2&   2&   1\\
\Delta_{1}&=   &   0  & 3 & 3&   0&  \\
\Delta_{2}&=   &   0  & 3&   0&   &
\end{array}.
\]
\end{example}

\section{Standard forms and exotic forms}\label{parampol}

At this point of the analysis we would like to find a precise description of all the polynomials having the same symmetric Hilbert function decomposition. To this purpose  we have firstly to deal with the fact that the number of variables involved in each homogeneous summand of a given polynomial $f$ may be larger than what is expected from the Hilbert function. This will be explained by the appearance of what we will call exotic summands of $f$. We will analyze their role in a description of all polynomials that have a given Hilbert function. 

Let us start with some examples clarifying the kind of phenomena that we have to treat.

\subsection{Standard and ``~Exotic~'' examples}

Let $A$ be a local Artinian Gorenstein algebra. As explained before it can be
represented as a quotient ${A\simeq T/I}$ where $I = f^{\perp}$ for a
polynomial $f\in S$. If the ideal $I$ is fixed, then $f$ is unique up to
action by a unit of $T$, but clearly the choice of $I$ such that ${A\simeq T/I}$ is not unique. In this section we wish to shed some light on how this choice can be made.

\begin{example}\label{ex:embed:part1}
Consider the ring ${A = \KK[\varepsilon]/(\varepsilon^4)}$ and polynomials ${g=x_1^{\:3}}$ and ${h=x_1^{\:3} + x_1x_2}$ in $\KK_\divp[x_1,x_2]$. Then 
\[{A  \simeq  \frac{\KK[y_1,y_2]}{(y_1^{\:4}, y_2)} = \frac{\KK[y_1, y_2]}{g^{\perp}}},\] 
but also 
\[{A  \simeq \frac{\KK[y_1, y_2]}{(y_1^{\:4}, y_2 - y_1^{\:2})} = \frac{\KK[y_1, y_2]}{h^{\perp}}}.\] 
Note that both $x_1$ and $x_2$ occur in $h$, but while $x_1$ is a partial of this polynomial, $x_2$ is not, since its space of partials is ${\df(h)=\langle h, x_1^{\:2} + x_2, x_1, 1\rangle}$.

\smallskip

Now consider the ring ${B = \KK[y_1,y_2]/(y_1^{\:4}-y_2^{\:2}, y_1y_2)}$ and polynomials ${p=x_1^{\:4}+x_2^{\:2}}$ and ${q=x_1^{\:4}+x_1^{\:2}x_2}$ in $\KK_\divp[x_1,x_2]$. Then 
\[{B=\frac{\KK[y_1,y_2]}{p^{\perp}}},\] 
and 
\[{B \simeq \frac{ \KK[y_1, y_2]}{(y_1^{\:3}-y_1y_2, y_2^{\:2})} = \frac{\KK[y_1, y_2]}{q^{\perp}}}.\] 
In this case, $x_2$ is an order\h{one} partial of both $p$ and $q$, since ${y_2(p)=x_2}$ and ${(-y_2+y_1^{\:2})(q)=x_2}$. However, $x_2$ occurs in $q$ in degree $3$, which may be surprising for a linear form that is a partial of order one.

\smallskip

As we will see in the remainder of this section, the most common behaviour is the one we can observe in Example \ref{partialsofapolynomial}: both $x_1$ and $x_2$ are partials of order $3$ and they occur in $f_4$; $x_3$ and $x_4$ are partials of orders $2$ and $1$, and they occur in $f_3$ and $f_2$, respectively.
\end{example}

\subsection{Description of the standard and the ``~exotic~'' phenomena}

Referring the notation of Example  \ref{ex:embed:part1}, we want to distinguish polynomials like $g$ and $p$ that have a ``standard'' behavior  from the ones like $h$ and $q$ where either one finds a variable that does not occur in the partials or a partial whose order does not ``match'' with the degree of the corresponding variable. To this end, in Definition \ref{standard:form} we will define  \emph{standard} forms of polynomials. Intuitively, they correspond to minimal embeddings of algebras, in terms of variables appearing in the related
polynomials $f$. Let ${f\in S}$ and ${A =T/f^{\perp}}$. Moreover let
\begin{equation}\label{homogeneousdecomp}
f=f_\degF+f_{\degF-1}+\cdots+f_0
\end{equation}
be the decomposition in homogeneous summands.

In Section~\ref{polgiv} we defined the Hilbert function of $A$ and its symmetric decomposition $\Delta$. In particular, we saw that ${\Delta_{a}(1) = \dim_{\KK} (\linear{f}{a}/ \linear{f}{a-1})}$, which is space of linear partials of $f$ of order exactly ${\degF-a-1}$. Let
\[
n_i = \sum_{j=0}^{i} \Delta_j(1)=\dim_{\KK} \linear{f}{i},
\]
i.e.\ the dimension of the space of linear partials of $f$ of order at least ${\degF - i - 1}$.
By degree reasons this space is contained in the space of linear partials of $f_{\degF - i} + f_{\degF-i-1} +  \cdots  + f_{\degF}$. But, as we have seen in Example~\ref{ex:embed:part1}, a linear form may occur in $f_{\degF-i}$ and be a partial of $f$ of order less than ${\degF - i - 1}$ or not be a partial of $f$ at all.

First of all let us fix here a basis of linear forms ${x_1, \ldots , x_n}$ in $S_1$ that agrees with the filtration by $\linear{f}{i}$:
\begin{multline}\label{Linfiltration}
\linear{f}{0}=\langle x_1, \ldots , x_{n_0}\rangle \subseteq  
    \linear{f}{1}=\langle x_1, \ldots , x_{n_1} \rangle \subseteq  \cdots\\
  \cdots \subseteq \linear{f}{\degF-2}=\langle x_1, \ldots , x_{n_{\degF-2}} \rangle  \subseteq S_1 = \langle x_1, \ldots , x_n \rangle. 
\end{multline}

\begin{definition}\label{standard:form}
Let ${f\in S}$ be a polynomial with homogeneous decomposition ${f = f_{\degF }+ \cdots + f_0}$. Let $\Delta$ be the symmetric decomposition of the Hilbert function of $T/f^{\perp}$. We say that $f\in S$ is in \emph{standard form} if
\[
    f_{\degF - i} \in \KK_{\divp}\left[\linear{f}{i}\right]=
    \KK_{\divp}\left[x_1, \ldots , x_{n_i} \right], \quad \mbox{for all } i,
\]
where ${x_1, \ldots , x_n}$ is any choice of basis for $S_1$ as in \eqref{Linfiltration}.
\end{definition}
\def\St{\operatorname{StandardForms}}%
We define the linear space of standard
forms.
\[
    \St = \left\{ f\in P_{\leq \degF}\ |\ \forall_i\ f_{\degF - i}
    \in \KK_{\divp}\left[\linear{f}{i}\right] \right\} = \left\{ f\in P_{\leq \degF}\ |\ \forall_i\ f_{i}
    \in \KK_{\divp}\left[\linear{f}{\degF-i}\right] \right\}.
\]

A first important property of standard forms is the following:

\begin{proposition}\label{coex}
The leading summand of a partial of $f$ of degree $\degF-i$ and order $j$ lies
in $\KK_{\divp}[\linear{f}{i-j}]$. 
\end{proposition}
\begin{proof}
Let $g$ be the leading summand of a partial of $f$ of degree $\degF-i$  and
order $j$, then any partial $x$ of degree one of $g$ is a partial of order at
least $\degF-i+j-1$ of $f$ and therefore lies in $\linear{f}{i-j}$. Therefore ${g\in \KK_{\divp}[\linear{f}{i-j}]}$.
\end{proof}

There may be variables appearing in $f$ that do not show up in the leading summands of partials of $f$.   It is tempting to call them exotic variables, but we reserve exotic for the non\h{standard} part of $f$.

\begin{definition}\label{exotic} 
Let ${f = f_{\degF }+ \cdots + f_0\in S}$ be the homogeneous decomposition of $f$, and choose a basis for $S_1$ as in \eqref{Linfiltration}.  The \emph{exotic summand} of degree ${\degF-i}$ of $f$ is the form 
\[
{f_{\degF-i, \infty}\in \langle x_{n_i+1}, \ldots , x_n\rangle\KK_{\divp}[x_1, \ldots , x_n]}
\] 
such that the degree ${\degF-i}$ homogeneous summand of $f$ can be written as
\[
f_{\degF-i}=f_{\degF-i,i}+ f_{\degF-i,\infty},
\]
with $f_{\degF-i,i}\in \KK_{\divp}[x_1, \ldots , x_{n_i}]$.
\end{definition}

Thus $f$ is in standard form if and only if all its exotic summands are zero.

\begin{example}\label{ex:embed:part2}
    Let us see how the above definitions work in the cases of
    Example~\ref{ex:embed:part1}. We have ${H_A = (1, 1, 1, 1)}$ and ${\Delta_0 =
    (1, 1, 1, 1)}$, $\Delta_i = \mathbf{0}$ for $i > 0$. Therefore $n_i = 1$
    for all $i$.

    Now we have ${g=x_1^{\:3}\in \KK_\divp[x_1]}$ so that $x_1^{\:3}$ is in standard form. On
    the other hand, ${x_1x_2\notin \KK_\divp[x_1]}$ so that ${h=x_1^{\:3} + x_1x_2}$ is not in
    standard form. In fact $x_1x_2$ is an exotic summand for $h$. 
    
    For the ring $B$, we get ${H_B = (1, 2, 1, 1, 1)}$, with symmetric decomposition ${\Delta_0 =
    (1, 1, 1, 1, 1)}$, ${\Delta_1 = (0, 0, 0, 0, 0)}$, ${\Delta_2 = (0, 1, 0, 0, 0)}$, 
    $\Delta_i = \mathbf{0}$ for ${i > 2}$. Therefore $n_0 = n_1 = 1$, and ${n_i = 2}$
    for all ${i\ge2}$. As above we can check that $p$ is in standard form, but $q$ is not, 
    having $x_1^{\:2}x_2$ as an exotic summand.
\end{example}

\subsection{Existence of standard forms and their presentation}
\label{standardformspresentation}

Let $A = T/f^{\perp}$ be an Artinian Gorenstein algebra. One could ask:
\begin{enumerate}
    \item if there exists a presentation $A  \simeq T/g^{\perp}$ with $g$ in
        standard form.
    \item in case such a presentation exists, whether there are any relations between $g$ and $f$.
\end{enumerate}

Fortunately, these questions have quite
satisfactory answers, as we explain below.
We need some notation. 
\def\That{\hat{T}}%
\def\ip#1#2{\left\langle #1,\, #2\right\rangle}%
\def\Ddual#1{#1^{\vee}}
\begin{notation}
Let $\That$ denote the power series ring obtained by
completing $T$ at the ideal of the origin. In coordinates, $\That :=
\KK[[y_1, \ldots , y_n]]$. We may interpret $S$ as subset of functionals on
$\That$ via the pairing defined by
\[
    \ip{y^{\alpha}}{x^{\beta}} = \begin{cases}
        1 & \mbox{if}\ \ \alpha = \beta\\
        0 & \mbox{otherwise}.
    \end{cases}
\]
Note that
\begin{equation}\label{eq:innerandcontraction}
\ip{\sigma}{f} = \ip{1}{\sigma(f)},
\end{equation}
as seen by decomposing $\sigma$ and $f$ into monomials.
In particular if $\sigma(f) = 0$, then $\ip{\sigma}{f} = 0$.

Let $\varphi:\That \to \That$ be an automorphism of $\That$. It induces a dual
map $\Ddual{\varphi}: S\to S$ defined by the condition
\begin{equation}\label{pairingadjointrule}
        \ip{\varphi(\sigma)}{f} =
        \ip{\sigma}{\Ddual{\varphi}(f)},\quad\mbox{for all}\quad \sigma\in T,\
        f\in P.
\end{equation}

    Let $I$ be a finite colength ideal of $T$ supported at the origin.     Then $T/I
    = \hat{T}/I$. Clearly, the quotients $\That/I$ and $\That/\varphi(I)$ are isomorphic.
Moreover if $I = f^{\perp}$, then $\varphi(I) =
\big(\Ddual{(\varphi^{-1})}(f)\big)^{\perp}$.
The fundamental result  is that for every $f$ we may find $\varphi$ so that
$\Ddual{\varphi}(f)$ is in standard form.
It fact we prove that $\varphi$ may be chosen ``~with no linear part~''.
\end{notation}
    
 We now make
this precise.
\begin{definition}
    Let $M$ be the unique maximal ideal of $\hat{T}$.
    Let $\varphi: \hat{T}\to \hat{T}$ be an automorphism. We say that
    $\varphi$ is \emph{of order (at least) two} if
    \[\varphi(y_i) = y_i \mod (y_1, \ldots ,y_n)^2\]
    for all $i$.
\end{definition}
\begin{remark}
    Every automorphism of $\hat{T}$ induces a linear action on $M/M^2$.
    The order two automorphisms are precisely those which act trivially. Thus
    they form a normal subgroup. In particular an inverse of an order two
    automorphism is also of order two.
\end{remark}

\begin{theorem}[Existence of standard forms]\label{ref:stform:thm}
    Let ${f\in S}$ be a polynomial with symmetric Hilbert function decomposition $\Delta$.
    Then there is an automorphism ${\varphi:\That\to \That}$ such that $\Ddual{\varphi}(f)$
    is in standard form.
    Consequently, ${f = \Ddual{\psi}(g)}$ for an element
    ${g\in \St}$ and an automorphism ${\psi = \varphi^{-1}}$.

    Moreover one can choose $\varphi$ and $\psi$ of order two.
\end{theorem}
\begin{proof}
    For the existence of $\varphi$ see \cite[Theorem 5.3AB]{I}.
    Take one such $\varphi$. We will compose $\varphi$ with a linear map to
    obtain the required order two automorphism.

    We have $\varphi(y_i) = \sum \lambda_{ij} y_j +
    s_i$, where $s_i\in (y_1, \ldots ,y_n)^2$.
    Let $\tau$ be the \emph{linear} automorphism of $\That$ defined by $\tau(y_i) =
    \sum \lambda_{ij} y_j$. Then $(\varphi \circ \tau^{-1})(y_i) \equiv y_i \mod M^2$ so
    $\varphi \circ \tau^{-1}$ is an automorphism of order two.
    Let ${g = \Ddual{\varphi}(f)}$. By definition $g$ is in standard form.
    Since $\tau$ is a linear automorphism of $\That$, the map
    $\Ddual{(\tau^{-1})}$
    is simply a linear transformation of $S$,~i.e. a change of variables.
    The definition of being in standard form is coordinate free, so that
    $\Ddual{(\tau^{-1})}$ preserves being in standard form. In particular $h =
    \Ddual{(\tau^{-1})}(g)$
    is in standard form.

    But $h = \Ddual{(\varphi \circ \tau^{-1})}(f)$, so that $\varphi \circ
    \tau^{-1}$ is a required automorphism of order two.
\end{proof}

\def\myN{\mathbb{N}_{0}}%
It is also important and interesting to see an explicit description of the
action $\Ddual{\varphi}:S\to S$ of an automorphism $\varphi:\That\to \That$.
For this, recall that $S$ is a \emph{divided power ring}: $x^{\alpha} \cdot
x^{\beta} = \binom{\alpha+\beta}{\alpha}x^{\alpha + \beta}$, where
$\binom{\alpha+\beta}{\alpha} = \prod \binom{\alpha_i+\beta_i}{\alpha_i}$.
\begin{proposition}\label{ref:dualautomorphism:prop}
Let $\varphi: \That\to \That$ be an automorphism. Let $D_i = \varphi(y_i) - y_i\in \That$. For a multi-index $\alpha$ denote $D^{\alpha} = D_1^{\alpha_1} \cdots D_n^{\alpha_n}$. Let $f\in S$. Then
\[
\Ddual{\varphi}(f) = \sum_{\alpha\in \myN^n} x^\alpha\cdot \big(
  D^{\alpha}(f) \big) = f + \sum_{i=1}^n x_i\cdot \big(D_i(f)\big) +  \cdots .
\]
\end{proposition}
\begin{proof}
See~\cite[Proposition~1.8]{J}.
\end{proof}

\begin{example}\label{ex:embed:part3}
Let us illustrate Theorem~\ref{ref:stform:thm} in the setup of Example~\ref{ex:embed:part2}. We should find an $f$ in standard form and an automorphism $\varphi$ of $\KK[y_1,y_2]$ such that ${\Ddual{\varphi}(f) = h= x_1^{\:3} +x_1x_2}$. We see that the linear partials of $h$ are spanned by $x_1$, so if we wish $\varphi$ to be of order at least two, we must have ${f\in\KK_\divp[x_1]}$. According to Proposition~\ref{ref:dualautomorphism:prop},  $\Ddual{\varphi}(f) = \sum_{\alpha\in \myN^n} x^\alpha\cdot \big(    D^{\alpha}(f) \big)$, for some elements of order at least two ${D_1, D_2\in\KK[[y_1,y_2]]}$. Since $f$ must have degree three, ${(D_iD_j)(f)=0}$, for any $i$ and $j$, so
\[x_1^{\:3} +x_1x_2=f+x_1D_1(f)+x_2D_2(f).\]
This implies that ${D_1(f)=0}$ and ${D_2(f)=x_1}$, so we must have ${D_1 = 0 \mod (y_1,y_2)^4}$ and ${D_2=y_1^{\:2} \mod (y_1,y_2)^4}$. Therefore we can choose $\varphi:\That\to\That$ to be the automorphism defined by $\varphi(y_1) = y_1$ and $\varphi(y_2) = y_2 + y_1^{\:2}$.
\end{example}

\subsection{Description of exotic summands}
\label{exoticsummandspresentation}

For parameterization purposes and dimension counts, it is interesting to consider families of polynomials yielding isomorphic algebras, or at least sharing the same Hilbert and symmetric decomposition.  Given a polynomial $f\in \KK_{\divp}[x_1,\ldots,x_{k}]$ such that ${\linearOp(f)=\langle x_1,\ldots,x_k\rangle}$, we consider the family 
\begin{equation}\label{blindexoticfibre}
\begin{aligned}
\mathcal{F}_f^m:=\big\{{g}\in\KK_{\divp}[x_1,\ldots,x_{k+m}]\mid &\; 
    g-f\in(x_{k+1},\ldots,x_{k+m}),\\
  &H_g=H_f,\, \linearOp(g)=\linearOp(f)
    =\langle x_1,\ldots,x_k\rangle\big\}.
\end{aligned}
\end{equation}

The next result gives a characterisation of the elements in this family. We will use the notation $\ls(f)$ for the leading summand of a polynomial $f$, i.e.\ if ${f=f_\degF+\cdots+f_0}$ is its decomposition into homogeneous summands, ${\ls(f)=f_\degF}$.

\begin{proposition}\label{blindexoticdescription}
Let ${f\in\KK_{\divp}[x_1,\ldots,x_k]}$ and assume
\[
{\linearOp(f)=\langle x_1,\ldots,x_k\rangle}.
\]
Let ${g\in\KK_{\divp}[x_1,\ldots,x_{k+m}]}$ be any polynomial. Then ${g\in \mathcal{F}_f^m}$ if and only if there are elements ${\phi_1,\ldots,\phi_m\in\KK[y_1,\ldots,y_k]}$ of order at least two such that
\begin{equation}\label{blindexoticsum}
g=\sum_{i_1,\ldots,i_m\ge0} x_{k+1}^{\ i_1}\cdots x_{k+m}^{\ i_m}\cdot
  \left( \phi_1^{\ i_1}\cdots \phi_m^{\ i_m}\right)(f).
\end{equation}
In particular, for each ${g\in \mathcal{F}_f^m}$, the algebras ${\KK[y_1,\ldots,y_{k+m}]/g^\perp}$ and ${\KK[y_1,\ldots,y_k]/f^\perp}$ are isomorphic.
\end{proposition}
\begin{proof}
Let $l$ be the dimension of ${\df(f)}$ and choose a basis ${h_1,\ldots,h_l}$ for this vector space such that ${\ls(h_1),\ldots,\ls(h_l)}$ are linearly independent. Choose elements ${\psi_1,\ldots,\psi_l\in\KK[y_1,\ldots,y_k]}$ such that for each $i$, ${\psi_i(f)=h_i}$. Let ${g\in \mathcal{F}_f^m}$ and write
\[g=f+x_{k+1}g_1+\cdots+x_{k+m}g_m\]
in such a way that for each $j$, we have ${g_j\in\KK_{\divp}[x_1,\ldots,x_{k+j}]}$. Then for each $i$, 
\[
\psi_i(g)=\psi_i(f)+x_{k+1}\psi_i(g_1)+\cdots+x_{k+m}\psi_i(g_m).
\]
Now Proposition~\ref{coex} tells us that ${\ls\big(\psi_i(g)\big)\in\KK_{\divp}[x_1,\ldots,x_k]}$ and since $\psi_i(f)$ cannot be cancelled by the terms in ${x_{k+1}\psi_i(g_1)+\cdots+x_{k+m}\psi_i(g_m)}$, we must have 
\[
\ls\big(\psi_i(g)\big)=\ls\big(\psi_i(f)\big)=\ls(h_i).
\]
But this implies that ${\psi_1(g),\ldots,\psi_l(g)}$ form a linearly independent set, and since the dimension of $\df(g)$ is also $l$ (since $g$ and $f$ yield algebras with the same Hilbert function), we get 
\[
\df(g)=\langle \psi_1(g),\ldots,\psi_l(g) \rangle. 
\]
In addition, we know that the variables ${x_{k+1},\ldots, x_{k+m}}$ cannot occur in the leading summand of $g$, also by Proposition~\ref{coex}, so the polynomials ${g_1,\ldots,g_m}$ have degree at most ${d-2}$. Now ${y_{k+m}(g)=g_m}$, so $g_m$ is a partial of $g$, which means that there is some ${\phi_m\in\langle \psi_1,\ldots,\psi_l \rangle}$ such that ${g_m=\phi_m(g)}$. Moreover $\phi_m$ has order at least one, because ${\deg g_m\le d-2}$, so ${\phi_m^{\: d+1}(g)=0}$. Denote ${\hat{g}_j=x_{k+1}g_1+\cdots+x_{k+j}g_j}$, and observe that ${y_{k+s}(\hat{g}_j)=g_s}$, if ${s\le j}$, and ${y_{k+s}(\hat{g}_j)=0}$, otherwise. So, 
\begin{align*}
g&=f+\hat{g}_{m-1}+x_{k+m}\cdot\phi_m(g)\\
&=f+\hat{g}_{m-1}+x_{k+m}\cdot\phi_m\big(f+\hat{g}_{m-1}+x_{k+m}\cdot\phi_m(g)\big)\\
&=f+\hat{g}_{m-1}+x_{k+m}\cdot\phi_m(f+\hat{g}_{m-1})+x_{k+m}^{\:2}\cdot\phi_m^{\:2}(g)
\intertext{and iterating this further we get}
g&=\sum_{i\ge0}x_{k+m}^{\:i}\cdot\phi_m^{\:i}(f+\hat{g}_{m-1}).
\end{align*}
Applying $y_{k+m-1}$ to both sides of this equality, we get
\[
y_{k+m-1}(g)=\sum_{i\ge0}x_{k+m}^{\:i}\cdot\phi_m^{\:i}\big(y_{k+m-1}(f+\hat{g}_{m-1})\big)
  =\sum_{i\ge0}x_{k+m}^{\:i}\cdot\phi_m^{\:i}(g_{m-1}).
\]
Again there must be some ${\phi_{m-1}\in\langle \psi_1,\ldots,\psi_l \rangle}$ such that ${\sum_{i\ge0}x_{k+m}^{\:i}\cdot\phi_m^{\:i}(g_{m-1})=\phi_{m-1}(g)}$. So,
\begin{align*}
g&=\sum_{i\ge0}x_{k+m}^{\:i}\cdot\phi_m^{\:i}(f+\hat{g}_{m-2}+x_{k+m-1}g_{m-1})\\
&=\sum_{i\ge0}x_{k+m}^{\:i}\cdot\phi_m^{\:i}(f+\hat{g}_{m-2})
  +x_{k+m-1}\sum_{i\ge0}x_{k+m}^{\:i}\cdot\phi_m^{\:i}(g_{m-1})\\
&=\sum_{i\ge0}x_{k+m}^{\:i}\cdot\phi_m^{\:i}(f+\hat{g}_{m-2})+x_{k+m-1}\cdot\phi_{m-1}(g).
\intertext{Iterating we get}
g&=\sum_{i_{m-1},i_m\ge0}x_{k+m-1}^{\:i_{m-1}}x_{k+m}^{\:i_m}\cdot
  \phi_{m-1}^{\:i_{m-1}}\phi_m^{\:i_m}(f+\hat{g}_{m-2}).
\end{align*}
Applying the remaining operators ${y_{k+m-2},\ldots,y_{k+1}}$ the same way, we obtain \eqref{blindexoticsum}. It remains to show that ${\phi_1,\ldots,\phi_m}$ have order at least two. Without loss of generality, suppose that $\phi_1$ has order one, and write ${\phi_1=\phi'+\phi''}$, where ${\phi'\in\langle y_1,\ldots,y_k\rangle}$ and ${\ord \phi''\ge2}$. Let ${t\in\langle x_1,\ldots,x_k\rangle}$ be such that ${\phi'(t)=1}$. Since $t$ is a partial of $f$, there exists ${\eta\in\KK[y_1,\ldots,y_k]}$ such that ${\eta(f)=t}$. Note that ${(\phi_u\eta)(f)=\phi_u(t)}$ is a constant, for all ${u>1}$. So
\[\eta(g)=\sum_{i_1,\ldots,i_m\ge0} x_{k+1}^{\ i_1}\cdots x_{k+m}^{\ i_m}\cdot
  \phi_1^{\ i_1}\cdots \phi_m^{\ i_m}\eta(f)=t+x_{k+1}+\sum_{u>1}x_{k+u}\phi_u(t),\]
and therefore $\eta(g)$ is a partial of $g$ of degree one that does not belong to ${\langle 1, x_1,\ldots,x_k\rangle}$, a contradiction. 

For the converse, suppose that $g$ admits a presentation as in \eqref{blindexoticsum}. Then clearly we have ${g-f\in(x_{k+1},\ldots,x_{k+m})}$. Let $\varphi$ be the automorphism of ${\KK[y_1,\ldots,y_{k+m}]}$ defined by ${\varphi(y_i)=y_i}$ for ${1\le i\le k}$ and ${\varphi(y_{k+i})=y_{k+i}+\phi_i}$, for ${1\le i\le m}$. Then by Proposition~\ref{ref:dualautomorphism:prop}, we see that ${g=\Ddual{\varphi}(f)}$. So $\varphi$ induces an isomorphism between ${\KK[y_1,\ldots,y_{k+m}]/g^\perp}$ and ${\KK[y_1,\ldots,y_k]/f^\perp}$, which proves the last statement, and shows that ${H_g=H_f}$. Finally, if ${t\in\linearOp(f)}$ and we take ${\eta\in\KK[y_1,\ldots,y_k]}$ such that ${\eta(f)=t}$, we may apply $\eta$ to both sides of \eqref{blindexoticsum} to get ${\eta(g)=t}$. So ${\linearOp(g)\subseteq\linearOp(f)}$ and since they have the same dimension, equality must hold, and ${g\in \mathcal{F}_f^m}$.
\end{proof}

\begin{remark}
We can get an alternative proof of Proposition~\ref{blindexoticdescription} if we take ${g\in \mathcal{F}_f^m}$ and use Proposition~\ref{coex} to see that since the linear partials of $g$ lie in ${\langle x_1,\ldots,x_k\rangle}$, the leading terms of partials of $g$ only involve these variables, like we did in the proof. Now if we denote $(g^\perp)^*$ the ideal generated by the initial forms of elements of $g^\perp$, we know that $(g^\perp)^*$ is the annihilator of the set of leading summands of all partials of $g$ (see a discussion on this at the beginning of Section~2 in \cite{CN13}, but also Proposition~3 in \cite{E}). Therefore for each ${j>k}$, ${y_j\in(g^\perp)^*}$, which means that there is an element ${D_j\in T}$ of order at least two such that ${y_j-D_j\in g^\perp}$. If a variable $y_{j_1}$ occurs in a $D_{j_2}$, with ${j_1,j_2>k}$ then we may replace $y_{j_1}$ by $D_{j_1}$ in $D_{j_2}$. Each time we do this, the minimal degree of $y_{j_1}$ in $D_{j_2}$ grows. When this degree exceeds $\deg g$, we may discard the remaining part, so the process eventually ends and we may assume that each ${D_j \in k[y_1, \ldots , y_k]}$. We can then consider the  automorphism ${\varphi:\That \to \That}$ which sends $y_j$ to itself for ${j \le k}$ and $y_j$ to ${y_j - D_j}$ for ${j > k}$. We can now show that ${\Ddual{\varphi}(g) = f}$ by showing that for any element ${\phi\in T}$, ${\big\langle \phi, \Ddual{\varphi}(g) \big\rangle=\langle \phi, f\rangle}$.
\end{remark}

Note that in the definition of the family ${\mathcal{F}_f^m}$ and the hypotheses of Proposition~\ref{blindexoticdescription}, the polynomial $f$ need not be in standard form. This result gives us a way of adding exotic summands to a polynomial without changing the Hilbert function of the algebra it yields. It also gives us the following result:
\begin{corollary}\label{blindexoticdimension}
Let ${f\in\KK_{\divp}[x_1,\ldots,x_{k}]\subset \KK_{\divp}[x_1,\ldots,x_{k+m}]}$ be a polynomial of degree $\degF$ and assume
$
{\linearOp(f)=\langle x_1,\ldots,x_k\rangle}.
$
Then the family $\mathcal{F}_f^m\subset \KK_{\divp}[x_1,\ldots,x_{k+m}]_{\leq d}$ has dimension
\[\dim \mathcal{F}_f^m=m\cdot\sum_{\substack{a\ge0\\ i\le\degF-a-2}} \Delta_{f,a}(i) \]
\end{corollary}
The next result shows that all exotic summands may be obtained in a similar description.

\begin{proposition}\label{fstandardexotic}
Let ${f\in S}$ be a polynomial of degree $d$ and choose a basis ${x_1,\ldots,x_n}$ for $S_1$ that agrees with the filtration in \eqref{Linfiltration}. Then we can write ${f=f_{\mathrm{st}}+f_{\mathrm{ex}}}$ such that $f_{\mathrm{st}}$ is in standard form and 
\begin{equation}\label{fexotic}
f_{\mathrm{ex}} = \sum_{\substack{
  \alpha>0\\
  \alpha_1=\cdots=\alpha_{n_1}=0}} x^\alpha\cdot \big(D^{\alpha}(f_{\mathrm{st}}) \big).
\end{equation}
where ${D_{n_1+1},\ldots,D_n\in T}$ have order at least two and 
\begin{enumerate}
\item ${\degF-a\le \deg D_k(f_{\mathrm{st}})\le\degF-2}$, if ${D_k(f_{\mathrm{st}})\ne0}$ and ${n_{a-1}<k\le n_a}$,  for any ${a\ge2}$;
\item ${\deg D_k(f_{\mathrm{st}}) \le\degF-2}$, if ${k> n_{\degF-2}}$.
\end{enumerate}
\end{proposition}
\begin{proof}
Applying Theorem \ref{ref:stform:thm} and Proposition~\ref{ref:dualautomorphism:prop} to $f$, we know that there exists a polynomial $g$ in standard form such that  
\begin{equation}\label{fstandardexoticproofdisplay} 
f=\Ddual{\varphi}(g) = \sum_{\alpha\in \myN^n} x^\alpha\cdot D^{\alpha}(g),
\end{equation}
with ${D_1,\ldots,D_n\in T}$ of order at least two. 

\emph{Claim.} The basis ${x_1,\ldots,x_n}$ also satisfies the filtration in \eqref{Linfiltration} for the polynomial $g$. To see this, let ${t\in\langle x_1,\ldots,x_n\rangle}$ be a linear partial of $g$ of order $j$, and ${\eta\in T}$ an element of order $j$ such that ${\eta(g)=t}$. If ${\langle\ ,\ \rangle:T\times S}$ is the usual pairing, using the rule in \eqref{pairingadjointrule}, we have 
\[
\big\langle\phi,\big(\varphi^{-1}(\eta)\big)(f)\big\rangle
  =\big\langle\big(\varphi^{-1}(\eta)\big)\cdot\phi,f\big\rangle=\langle\varphi(\phi),t\rangle,
\]
for any ${\phi\in T}$. Since $\varphi$ is of order at least two, this implies ${\big(\varphi^{-1}(\eta)\big)(f)=t}$. We also know that ${\ord\varphi^{-1}(\eta)=\ord\eta}$, so $t$ is also a linear partial of $f$ of order $j$. So $f$ and $g$ have the same linear partials and those partials have the same order, which proves the claim. 

Now we wish to show that $g$ can be replaced by a polynomial $g'$, also in standard form, so that we do not need the first operators ${D_1,\ldots,D_{n_1}}$. We consider expression \eqref{fstandardexoticproofdisplay} , and we start by factoring the power of any variable, say $x_1^{\:\alpha_1}$, and the power of the corresponding operator, $D_1^{\:\alpha_1}$. Note that $D_1^{\:\alpha_1}$ obviously commutes with the remaining operators ${D_{2}^{\:\alpha_{2}} \cdots D_n^{\:\alpha_n}}$. In order to move also the term $x_1^{\:\alpha_1}$ to the right side of ${D_{2}^{\:\alpha_{2}} \cdots D_n^{\:\alpha_n}}$, we can apply the rule
\[
x_i\big(\phi(h)\big)=\phi(x_ih)-\phi^{(i)}(h), \quad \text{for any } \phi\in T \text{ and } h\in S,
\]
where ${\phi^{(i)}=\tfrac{\partial}{\partial y_i}\phi}$. We obtain
\begin{align*}
f &= \sum_{\alpha\in \myN^n} (x_2^{\:\alpha_2}\cdots x_n^{\:\alpha_n})x_1^{\:\alpha_1}  
    \cdot \big( ( D_{2}^{\:\alpha_{2}} \cdots D_n^{\:\alpha_n} )D_1^{\:\alpha_1}(g) \big),\\ 
  &= \sum_{i\ge0}\sum_{\alpha_2,\ldots,\alpha_n} (x_2^{\:\alpha_2}\cdots x_n^{\:\alpha_n})
    \cdot \Big( \big(D_{2}-D_{2}^{\:(1)}D_1\big)^{\:\alpha_{2}} \cdots 
    \big(D_{n}-D_{n}^{\:(1)}D_1\big)^{\:\alpha_n}\Big) 
    \big(x_1^{\:i} \cdot D_1^{\:i}(g)\big),\\
  &= \sum_{\alpha_2,\ldots,\alpha_n} (x_2^{\:\alpha_2}\cdots x_n^{\:\alpha_n})
    \cdot \big( (D'_{2})^{\:\alpha_{2}} \cdots(D'_{n})^{\:\alpha_n}\big) (g'),
\end{align*}
where ${g'=\sum_{i\ge0}x_1^{\:\alpha_1} \cdot D_1^{\:\alpha_1}(g)}$ and ${D'_i=D_{i}-D_{i}^{\:(1)}D_1}$. The equality between the first and the second lines can be checked by hand in a straightforward, even if cumbersome, computation. Observe that in this way we get an extra piece $D_{i}^{(1)}$, but this does not change the fact that the new elements ${D'_1,\ldots,D'_n}$ have order at least two. Repeating this procedure, we can rewrite
\begin{equation*}
f = \sum_{\substack{
  \alpha\in \myN^n\\
  \alpha_1=\cdots=\alpha_{n_1}=0}} x^\alpha\cdot \big(
  D''^{\alpha}(g'') \big),\qquad \text{where} \qquad 
  g'' = \sum_{\alpha\in \myN^{n_1}} x^\alpha\cdot \big(
  (D'')^{\alpha}(g) \big),
\end{equation*}
with some modified ${D''_1,\ldots,D''_n\in T}$ of order at least two. Since any terms of $f$ and any terms of $g$ involving only the variables ${x_1,\ldots,x_{n_1}}$ are not exotic (because the choice of basis ${x_1,\ldots,x_n}$ agrees with the filtration in \eqref{Linfiltration} for both $f$ and $g$) we have that $g''$ is in standard form.

Now we know that for any ${k>n_1}$, ${D''_k(g'')}$ is a partial of degree at most ${\degF-2}$, otherwise ${x_k}$ would occur in the leading summand of $f$, and all such linear partials belong to ${\langle x_1,\ldots,x_{n_0}\rangle}$. Finally, suppose that for some $k$ we have ${n_{a-1}<k\le n_a}$ but ${\deg D''_k(g'')<\degF-a}$. Then the term ${x_kD''_k(g'')}$ has degree at most ${d-a}$ and belongs to ${\KK_\divp[x_1,\ldots,x_{n_a}]}$, so it is not part of an exotic summand. The terms ${x_k^{\:2}{D''_k}^2(g''), \dots, x_k^{\:\degF-1}{D''_k}^{\degF-1}(g'')}$ have lower degree so  are also not part of exotic summands. Therefore we can perform another modification as above, with the variable $x_k$ and the corresponding operator $D''_k$. Doing this for every such $k$, we may replace $g''$ by some $g'''$ also in standard form, and $f$ will be written as in \eqref{fexotic}, with ${f_{\mathrm{st}}=g'''}$.
\end{proof}

\begin{example}\label{ex:11111}
    Let $f$ be a polynomial of degree five such that $H_{f} = (1, 1, 1, 1,
    1, 1)$; this is the minimal possible Hilbert function. Then $H_f$ is symmetric, so that
    the only possible symmetric decomposition is $\Delta_{f, 0} = H_f = (1, 1, 1,
    1, 1, 1)$ and all other $\Delta_{f, i}$ equal to zero vectors.

    By Theorem~\ref{ref:stform:thm} we see that $f = \Ddual{\varphi}(g)$,
    where $g$ is in standard form and $\varphi$ is of order at least two. We have $\Delta_{g, i} =
    \Delta_{f, i}$ for all $i$, so $\dim_{\KK} \linear{g}{i} = 1$ for
    all $i$. Choose a generator $x$ for this space.
    Let $g = g_5 +  \cdots  + g_0$ be the decomposition into homogeneous
    summands. From the definition of standard form we see that $g_i\in \KK[x]$
    for all $i$, i.e. $g\in \KK[x]$.
    So we may write $g = a_5x^5 + a_4 x^4 + \cdots  + a_0$ for constants
    $a_i$. Since $g$ has degree five, we have $a_5\neq 0$ and by changing $x$
    we may assume that $a_5 = 1$.

    Now $f = \sum_{\alpha\in \myN^n} x^\alpha\cdot \big(
        D^{\alpha}(g) \big)$,
        where $D_i = \varphi(y_i) - y_i\in M^2$.
        Then $D_iD_jD_k(g) = 0$ for all $i,j,k$ and the sum becomes shorter:
        $f = g + \sum_{i} x_i\cdot D_i(g) + \sum_{i,j} x_{i}x_j\cdot
            D_iD_j(g)$.
        We see that $\deg\big(D_i(g)\big) \leq 3$ so that
        \[f = x^5 + \sum_{i=1}^n \lambda_ix_i
        \cdot  x^3 + \sum_{i=1}^{n} \mu_i x_i  x^2 + \sum_{i, j}
        \lambda_i \lambda_j x_ix_jx + Q,\]
        where $\lambda_i, \mu_i$ are constants and $Q$ is a polynomial of
        degree at most two, partially depending on $\lambda_i$ and $\mu_i$.
        What is the dimension of possible $f$ obtained this way? Each
            $D_i$ may be chosen as an element of the square of the maximal
            ideal of $T/(x^5)^{\perp}$, therefore
            we have $(5-2)$-dimensional choice. Together with the choice of $x$,
        we obtain at most a $4(\dim \Spec(T) )= 4n$-dimensional family.
\end{example}

\section{Apolarity and local cactus rank}\label{apolarschemes}
We shall now apply our analysis of Hilbert functions of  polynomials to apolar subschemes of a homogeneous form.  Recall from \ref{nota} that we denote by $\SV={\KK_{\divp}}[x_0,\ldots ,x_n]$ and $\SVdual={\KK}[y_0,\ldots ,y_n]$.

\begin{definition}\label{apolardef}
A subscheme ${Z\subset {\mathbb P}(\SV_{1})}$ is apolar to a form ${F\in \SV}$ if its homogeneous ideal ${I_{Z}\subset \SVdual}$ is contained in $F^{\bot}$.
\end{definition}
Apolarity for a subscheme  ${Z\subset {\mathbb P}(\SV_{1})}$ to a form $F$ of degree $\degF$ may be given the following natural interpretation in terms of the $d$-uple embedding of $Z$, the image $\nu_{\degF}(Z)\subset {\mathbb P}(\SV_{d})$ where ${\nu_\degF: \PP(\SV_1)\to  \PP(\SV_\degF)}$,  
${[l]\mapsto [l^\degF]}$.
\begin{lemma}\label{apolarlemma} (Apolarity Lemma). A scheme $Z\subset {\mathbb P}(\SV_{1})$ is apolar to $F\in \SV_d$ if and only if  $[F]\in \langle \nu_{\degF}(Z) \rangle\subset {\mathbb P}(\SV_{d})$.
\end{lemma}
\begin{proof}
If $Z$ is apolar to $F$, then ${(I_Z)_\degF\subseteq (F^{\bot})_\degF}$ and we get ${[F]\in V\big((I_Z)_\degF\big)=\langle \nu_{\degF}(Z) \rangle \subset \PP(\SV_\degF)}$, so the ``~only if~''  part follows.  For the ``~if~'' part,  $(I_Z)_e\subset (F^{\bot})_e=\SVdual_e$ when $e>\degF$, so it remains to consider ${\Psi\in (I_Z)_e}$, for some ${e\le d}$.  In this case, ${\SVdual_{d-e}\Psi\subset (I_Z)_\degF}$. So if  ${[F]\in \langle \nu_{\degF}(Z) \rangle}$, then ${\SVdual_{d-e}\Psi\subset (F^{\bot})_\degF}$. But ${\SVdual_{d-e}\Psi( F)=0}$ only if $\Psi(F)=0$, so the ``~if'' part follows also.
\end{proof}

We are particularly interested in minimal apolar zero\h{dimensional} subschemes to a form, their length is called the cactus rank of the form.
The closure of the  set of forms with a given cactus rank  is called a cactus variety of forms, although it may be reducible.
Minimal apolar zero\h{dimensional} schemes are locally Gorenstein, so our first aim is to describe the forms that have a given minimal length local Gorenstein scheme.
 For any form $F$ a minimal apolar zero\h{dimensional} scheme decomposes into local Artinian Gorenstein schemes. This decomposition corresponds to an additive decomposition of the form $F$.  In particular, the cactus variety is the join of varieties of forms whose minimal apolar scheme is local.

    Let $F$ be a form of degree $\degF$ in $\SV$ and $\ptk\in \PP(\SV_1)$.
Consider the family ${\mathcal{Z}\subset \textrm{Hilb}\big(\mathbb{P}(\SV_1)\big)}$ of subschemes apolar to $F$ and
    supported at $\ptk$.
    We construct a particular subscheme ${\Zfl\in
    \mathcal{Z}}$ that we will call the natural apolar subscheme of $F$ at $\ptk$.
        The element ${\ptk\in \PP(\SV_1)}$ defines a hyperplane ${V(l) \subset
    \PP(\SVdual_1)}$.
    The complement $U$ of this hyperplane is
    isomorphic to an affine space $\Spec (\SW)$.
    Moreover, we get a homomorphism ${\pi = \pi_{[l]}:\SV\to \SW}$
    corresponding to passing from the homogeneous coordinate ring $S$ of
    $\PP(\SVdual_1)$ to the coordinate ring of $U$.

Choose dual bases ${\langle l,l_{1},\ldots,l_{n}\rangle}$ and ${\langle l',l'_{1},\ldots,l'_{n}\rangle}$ for $\SV_1$ and $\SVdual_1$, respectively. Let $\SW=\KK[l_{1},\ldots,l_{n}]$ and let $\SWdual=\KK[l'_{1},\ldots,l'_{n}]$. Then $(\SW)_1$ and $(\SWdual)_1$  are natural dual spaces like $\SV_1$ and $\SVdual_1$ above, and $\SWdual$ is the coordinate ring of the affine space ${U'}$ that contains the point $[l]$ and is the complement of the hyperplane ${V(l') \subset\PP(\SV_1)}$. Given any polynomial ${g\in\SW}$ we denote by $Z_g$ the subscheme ${V(g^{\perp}) \subseteq U'\subset \mathbb{P}(\SV_1)}$.

    \begin{definition}\label{ref:naturalapolar:def}
        Let $F\in S$ be any form and $l\in \SV_1$ a  linear form.
        Take $f := \pi_{[l]}(F)$ and $f^{\perp} \subseteq \SWdual$.
        We define $\Zfl$ to be the subscheme ${Z_f=V(f^{\perp}) \subseteq U'\subset \mathbb{P}(\SV_1)}$.
    \end{definition}

        Since $\Zfl$ is finite, it is a closed subset of $\PP(\SV_1)$. By construction, the support of $\Zfl$ is $[l]\in \PP(\SV_1)$.

    In the definition of $\Zfl$ we have not used any particular coordinate system.
    However, to simplify the following proofs we fix coordinates.
    First, note that for every lifting ${l\in \SV_1}$ of
    $[l]$ we have a canonical isomorphism ${\SW  \simeq \SV/(l-1)}$.
    Changing ${x_0, \ldots, x_n}$ if necessary we may assume that ${l = x_0}$.
    Then $\SW$ may be identified with $S=\KK_{\divp}[x_1, \ldots ,x_n]$ and $\SWdual$ with
    $T=\KK[y_1, \ldots ,y_n]$. The homomorphism
    \[\pi:=\pi_{x_0}:\SV\to S\]
    sends $x_0$ to $1$ and $x_i$ to itself for $i > 0$. Note that $\pi$ induces an
    isomorphism between the space $\SV_{\degF}$ of homogeneous polynomials of
    degree $\degF$ and the space $(S)_{\leq \degF}$ of polynomials of
    degree $\degF$.
    Furthermore we fix a ``dual'' homomorphism
    \[\pi^*:\SVdual \to T\]
    sending $y_0$ to $1$ and $y_i$ to itself for $i > 0$.

Let ${G\in \SV}$ be a homogeneous polynomial, let ${\Psi \in \SVdual}$ be a homogeneous differential operator, and denote ${g = \pi(G)}$ and ${\psi = \pi^*(\Psi)}$. In general, ${\pi\big(\Psi(G)\big)}$ and $\psi(g)$ are different polynomials, but the following lemma gives the basic relation between them.

\begin{lemma}\label{ref:localtoglobal:lem}
        Let ${G\in \SV}$ be a homogeneous polynomial, let
        ${\Psi\in \SVdual}$ be a homogeneous operator, with ${\deg G\ge\deg\Psi}$, and let ${\degF = \deg G-\deg\Psi}$.
        Let ${g = \pi(G)}$ and ${\psi = \pi^*(\Psi)}$, then the degree-$\degF$ tails
        of ${\pi\big(\Psi(G)\big)}$ and $\psi(g)$ are equal.
        Moreover, if $G$ is divisible by ${x_0}^{\deg\Psi}$, then ${\pi\big(\Psi(G)\big)
        = \psi(g)}$.
\end{lemma}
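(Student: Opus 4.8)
The plan is to reduce both assertions to the case of a single divided-power monomial, where they become a short bookkeeping computation. Write $D = \deg F$ and $e = \deg\Psi$, and assume $\Psi(F)\neq 0$ (the opposite case is trivial); then $d = \deg\Psi(F) = D - e$, since $\Psi(F)$ is homogeneous, and in particular $\pi\big(\Psi(F)\big)$ has degree $\le d$, so it already equals its own degree $d$ tail. Because contraction $T\times S\to S$ is bilinear, $\pi$ and $\pi^{*}$ are linear, and truncation $g\mapsto g_{\le d}$ is linear with a fixed bound $d = D-e$, it suffices to prove the statement when $F = x^{[\alpha]}$ with $|\alpha| = D$ and $\Psi = y^{\gamma}$ with $|\gamma| = e$. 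Here one uses that, under our conventions, $\pi$ sends the monomial $x^{[\alpha]}$ to $x_{1}^{[\alpha_{1}]}\cdots x_{n}^{[\alpha_{n}]}$ (dropping the $x_{0}$-exponent) and $\pi^{*}$ sends $y^{\gamma}$ to $y_{1}^{\gamma_{1}}\cdots y_{n}^{\gamma_{n}}$, which is consistent with $\pi(x_0)=1$, $\pi(x_i)=x_i$ and with the isomorphism $\SV_D\simeq(\SW)_{\le D}$ noted above.

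For such monomials everything is explicit. On one side, $\Psi(F) = x^{[\alpha-\gamma]}$ if $\alpha\ge\gamma$ coordinatewise and $0$ otherwise, so $\pi\big(\Psi(F)\big) = x_{1}^{[\alpha_{1}-\gamma_{1}]}\cdots x_{n}^{[\alpha_{n}-\gamma_{n}]}$ when $\alpha\ge\gamma$ and $0$ otherwise. On the other side, with $f = \pi(F) = x_{1}^{[\alpha_{1}]}\cdots x_{n}^{[\alpha_{n}]}$ and $\psi = \pi^{*}(\Psi) = y_{1}^{\gamma_{1}}\cdots y_{n}^{\gamma_{n}}$, one gets $\psi(f) = x_{1}^{[\alpha_{1}-\gamma_{1}]}\cdots x_{n}^{[\alpha_{n}-\gamma_{n}]}$ when $\alpha_{i}\ge\gamma_{i}$ for all $i\ge 1$, and $0$ otherwise. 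Thus the two outputs agree except that $\pi\big(\Psi(F)\big)$ in addition requires $\alpha_{0}\ge\gamma_{0}$, so there are three cases: if $\alpha_{i}<\gamma_{i}$ for some $i\ge 1$ both sides are $0$; if $\alpha_{i}\ge\gamma_{i}$ for all $i\ge 1$ and also $\alpha_{0}\ge\gamma_{0}$ the two sides are the same monomial; and if $\alpha_{i}\ge\gamma_{i}$ for all $i\ge 1$ but $\alpha_{0}<\gamma_{0}$, then $\pi\big(\Psi(F)\big) = 0$ while $\psi(f)$ is the single monomial of degree $\sum_{i\ge 1}(\alpha_{i}-\gamma_{i}) = (D-\alpha_{0})-(e-\gamma_{0}) = d + (\gamma_{0}-\alpha_{0}) > d$, so its degree $d$ tail vanishes as well. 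In all three cases $(\psi(f))_{\le d} = \pi\big(\Psi(F)\big)$, and summing over the monomials of $F$ and of $\Psi$ proves the first assertion.

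For the last assertion, note that $F$ divisible by $x_{0}^{e}$ implies that every monomial $x^{[\alpha]}$ occurring in $F$ has $\alpha_{0}\ge e$, while $\gamma_{0}\le|\gamma| = e$ for every monomial $y^{\gamma}$ of $\Psi$; hence $\alpha_{0}\ge\gamma_{0}$ always holds, the third case above never occurs, and in the remaining two cases $\pi\big(\Psi(F)\big) = \psi(f)$ with no truncation. Summing over monomials gives $\pi\big(\Psi(F)\big) = \psi(f)$. The whole argument is routine; the one point with actual content is the degree count in the third case, which is exactly why the general statement must be phrased with degree $d$ tails and not with plain equalities---and it is what I would be most careful to get right.
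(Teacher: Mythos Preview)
Your proof is correct and follows essentially the same approach as the paper: reduce by bilinearity to a single monomial pair, compute both sides explicitly, and use the degree count $\sum_{i\ge 1}(\alpha_i-\gamma_i)=d+(\gamma_0-\alpha_0)>d$ in the one mismatched case to show that truncation at $d$ kills the discrepancy. The only cosmetic difference is that you partition into three cases (according to whether $\alpha_i\ge\gamma_i$ fails for some $i\ge 1$, holds together with $\alpha_0\ge\gamma_0$, or holds with $\alpha_0<\gamma_0$), whereas the paper splits first on $\alpha_0\gtrless\gamma_0$; the content is the same.
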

\begin{proof}
        The first statement is equivalent to saying that the images of
        $\pi\big(\Psi(G)\big)$ and
        $\psi(g)$ are equal in the linear space $S/(S)_{>\degF}$. Therefore,
        both statements are linear with respect to $\Psi$ and $G$ and it is
        enough to prove them in the case when $G$ and $\Psi$ are
        monomials. Let $G = x_0^{\alpha_0} \ldots x_{n}^{\alpha_n}$ and $\Psi =
        y_0^{\beta_0} \ldots  y_n^{\beta_n}$.
        By definition
        \begin{align*}
            \Psi(G)&=\begin{cases}
                x_0^{\alpha_0 - \beta_0}x_1^{\alpha_1 - \beta_1}  \cdots
                  x_n^{\alpha_n - \beta_n}
                &\text{if } \alpha_i\geq\beta_i\mbox{ for all } i\geq 0,\\
                0 &\text{otherwise,}
        \end{cases}\\
        \psi(g)&=\begin{cases}
                x_1^{\alpha_1 - \beta_1} \cdots x_n^{\alpha_n - \beta_n}
                &\text{if } \alpha_i\geq\beta_i\mbox{ for all } i\geq 1,\\
                0 &\text{otherwise.}
            \end{cases}
        \end{align*}
        We consider two cases. First, suppose $\alpha_0 \geq \beta_0$.
        Then the conditions ${\forall{i\geq 0},\ \alpha_i - \beta_i \geq
        0}$ and ${\forall{i\geq 1},\ \alpha_i - \beta_i \geq 0}$ are equivalent. Thus 
        ${\pi\big(\Psi(G)\big) = \psi(g)}$ and so their images in $S/(S)_{>\degF}$ agree.

        Next, suppose that ${\alpha_0 < \beta_0}$.
        Then $\Psi(G) = 0$. Suppose $\psi(g) \neq 0$ in $S$. Then $\psi(g) =
        x_1^{\alpha_1 - \beta_1}  \ldots  x_n^{\alpha_n - \beta_n}$ is
        a monomial of degree $\sum_{i\geq 1} \alpha_i - \beta_i = d - (\alpha_0 -
        \beta_0) > \degF$, thus its image in $S/(S)_{>\degF}$ is zero, i.e.\
        equal to the image of $\Psi(G)$. This finishes the proof of the first
        claim.

        For the second claim, note that by assumption ${\alpha_0 \geq \deg
        \Psi\geq \beta_0}$, thus the proof of the first case above applies,
        giving $\pi(\Psi(G)) = \psi(g)$.
\end{proof}

    \begin{corollary}\label{ref:localzeromeansglobalzero:cor}
        Let ${G\in \SV}$ and ${\Psi\in \SVdual}$ be homogeneous polynomials.  Let ${g =
        \pi(G)}$ and\linebreak ${\psi = \pi^*(\Psi)}$. If
        ${\psi(g) = 0}$, then ${\Psi(G) = 0}$.
    \end{corollary}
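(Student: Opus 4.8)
The plan is to read the statement straight off Lemma~\ref{ref:localtoglobal:lem}, using the fact recorded just before that lemma that $\pi$ induces an isomorphism $\SV_e \cong (\SW)_{\le e}$ for every $e$. Two consequences of this will be used: $\pi$ is injective on each homogeneous piece of $\SV$, and $\pi$ never raises degree (indeed it sends a monomial $x_0^{a_0}\cdots x_n^{a_n}$ to $x_1^{a_1}\cdots x_n^{a_n}$).

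First I would dispose of the trivial case: if $\Psi(F)=0$ there is nothing to prove, so assume $\Psi(F)\neq 0$. Then $\Psi(F)$ is homogeneous of degree $d := \deg\Psi(F) = \deg F - \deg\Psi \ge 0$. By injectivity of $\pi$ on $\SV_d$, the polynomial $\pi\big(\Psi(F)\big)$ is a nonzero element of $(\SW)_{\le d}$; in particular it coincides with its own degree $d$ tail, which is therefore nonzero.

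On the other hand, Lemma~\ref{ref:localtoglobal:lem} applied to $F$ and $\Psi$, with this value of $d$, asserts that the degree $d$ tails of $\pi\big(\Psi(F)\big)$ and of $\psi(f)$ agree. Since $\psi(f)=0$ by hypothesis, the degree $d$ tail of $\pi\big(\Psi(F)\big)$ vanishes, contradicting the previous paragraph. Hence $\Psi(F)=0$.

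I do not expect any genuine obstacle: all the real content sits in Lemma~\ref{ref:localtoglobal:lem}, and the corollary is essentially the observation that on a fixed graded piece $\pi$ loses no information, so the "degree $d$ tail" appearing in the lemma in fact records all of $\pi\big(\Psi(F)\big)$. The only point needing a moment's care is the degree bookkeeping, i.e.\ checking that $\deg\pi\big(\Psi(F)\big)\le d$ so that passing to the degree $d$ tail is harmless, and this is immediate from the description of $\pi$ on monomials. One could alternatively re-derive the claim by unwinding the definitions directly on monomials, but invoking the lemma is cleaner and avoids repeating its case analysis.
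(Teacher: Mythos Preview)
Your proof is correct, but it takes a slightly different route from the paper's. The paper avoids the degree-tail bookkeeping by passing to $F' := x_0^{b}F$ with $b = \deg\Psi$; since $\pi(F') = f$ and $F'$ is divisible by $x_0^{\deg\Psi}$, the ``moreover'' clause of Lemma~\ref{ref:localtoglobal:lem} gives $\pi\big(\Psi(F')\big) = \psi(f) = 0$ exactly, whence $\Psi(F') = 0$ by injectivity of $\pi$, and then $\Psi(F) = y_0^{b}\big(\Psi(F')\big) = 0$. Your argument instead invokes the first clause of the lemma directly and observes that $\pi\big(\Psi(F)\big)$, having degree at most $d$, \emph{is} its own degree $d$ tail, so the lemma already forces it to vanish. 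Your approach is arguably cleaner, since it uses only the weaker conclusion of the lemma and no auxiliary polynomial; the paper's version trades this for not having to think about tails at all. Either way the content is the same.
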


    \begin{proof}
        Let ${b = \deg(\Psi)}$ and ${G' := x_0^{b}G}$. Then ${g = \pi(G')}$ and $G'$,
        $\Psi$, $g$, $\psi$ satisfy assumptions of
        Lemma~\ref{ref:localtoglobal:lem}, therefore ${\pi\big(\Psi(G')\big) = \psi(g) = 0}$.
        Since $\pi$ is an isomorphism between $\SV_{\degF}$ and
        $(S)_{\leq \degF}$ we have ${\Psi(G') = 0}$. But ${G =
        y_0^{b} (G')}$, so that
        \[{\Psi(G) = \Psi\big(y_{0}^b(G')\big) =y_0^{b}\Psi(G') = 0}.\qedhere\]
    \end{proof}

    \begin{corollary}\label{ref:containment:lem}
        Let ${F\in S}$ be a homogeneous polynomial and ${l\in \SV_1}$ any linear form.  Then the scheme $\Zfl$  (see Definition~\ref{ref:naturalapolar:def}) is apolar to $F$.
    \end{corollary}
    \begin{proof}
        Take any homogeneous form $\Psi\in I(\Zfl)$. Then $\psi=\pi^*(\Psi)$ is an
        element annihilating $f=\pi(F)$.
        By Corollary~\ref{ref:localzeromeansglobalzero:cor} we have $\Psi(F) =
        0$, i.e.\ ${\Psi\in F^{\perp}}$.
    \end{proof}

\begin{remark}
    It is easy to characterize the $d$-uple embedding $\nu_\degF(\Zfl)$ in a manner similar to the proof
    of the Apolarity Lemma \ref{apolarlemma}.

            For $l\in \SV_1$, let $l^{\perp}\cap \SVdual_1$ be the subspace
            of linear forms in $\SVdual_1$ that annihilate $l$, and let
            $(l^{\perp})^e$ be its $e$-th symmetric product.  Then
            ${(l^{\perp})^e(F)\subset \SV_{\degF-e}}$ for ${e\le\degF}$ is a subspace of
            partials of $F$ or degree $\degF-e$, and the linear span of
            $\nu_\degF(\Zfl)$ is given by
\[
    \langle \nu_{\degF}(\Zfl)\rangle=\PP\big( l^\degF\oplus l^{\degF-1}\cdot
  (l^{\perp})^{\degF-1}(F)\oplus \cdots \oplus  l\cdot (l^{\perp})^{1}(F)\oplus F\big).
\]
Furthermore ${\nu_{\degF}(\Zfl)= \langle \nu_{\degF}(\Zfl)\rangle\cap \Ver{\degF,n}\subset\PP(\SV_\degF)}$.
\end{remark}

The following lemma is a private communication from Jaros\l{}aw Buczy\'nski. 
    \begin{lemma}[Buczy\'nski]\label{ref:reduction:lem}
        If $Z$ is any local scheme in $\PP(\SV_1)$ apolar to a homogeneous polynomial ${F\in S}$
        and supported at
        $[l] = [1:0:\cdots : 0]$, then there exists a closed subscheme ${Z' \subseteq Z}$, apolar to $F$,
        such that $Z'= Z_{G, l}$, for some ${G\in \SV}$. Moreover,
        $F = \Psi(G)$ for some $\Psi\in \SVdual$.
    \end{lemma}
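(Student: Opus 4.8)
The plan is to pass to the affine chart around the support of $Z$, recast everything in terms of Macaulay inverse systems, and then build $G$ as a homogenization of a well-chosen inverse-system element. Fix coordinates so that $l=x_{0}$, $\SW=\KK[x_{1},\dots,x_{n}]$, $\SWdual=\KK[y_{1},\dots,y_{n}]$, keep the maps $\pi,\pi^{*}$ of the excerpt, and set $d:=\deg F$ and $f:=\pi(F)\in(\SW)_{\le d}$. Since $[l]$ lies in $U'=\PP(S_{1})\setminus V(y_{0})=\Spec\SWdual$, the finite scheme $Z$ is contained in $U'$ and cut out there by an ideal $J=\pi^{*}(I_{Z})\subseteq(y_{1},\dots,y_{n})$ with $A:=\SWdual/J$ Artinian local; let $M:=J^{\perp}\subseteq\SW$ be the space of polynomials annihilated by $J$ under contraction, i.e.\ the Macaulay inverse system of $A$, a finite-dimensional $\SWdual$-submodule with $\dim_{\KK}M=\length(Z)$. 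The elementary observation is that every $g\in M$ satisfies $J\subseteq g^{\perp}$, hence $Z_{g}\subseteq Z$; so it suffices to exhibit $g\in M$ whose degree $d$ tail is $f$, after which $G$ and $\Psi$ can be written down explicitly.

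The crux is thus the claim that $f\in N_{d}:=\{\,m_{\le d}:m\in M\,\}$. I would prove it by Macaulay duality. Set $J_{\le d}:=J\cap(\SWdual)_{\le d}$. The classes $\overline{y^{\gamma}}$ with $|\gamma|\le d$ span the image of $(\SWdual)_{\le d}$ in $A$, and a linear relation among them is precisely an element $\psi\in J_{\le d}$; hence the rule $\overline{y^{\gamma}}\mapsto(\text{the constant term of }y^{\gamma}(f))$ descends to a well-defined linear functional on this subspace of $A$ if and only if the constant term of $\psi(f)$ vanishes for every $\psi\in J_{\le d}$. Granting that, one extends this functional to a linear functional on all of $A$; under the identification $M\cong\operatorname{Hom}_{\KK}(A,\KK)$ the corresponding element $m\in M$ satisfies $m_{\le d}=f$, so $f\in N_{d}$. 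To prove the vanishing: since $I_{Z}$ is the saturated ideal of a scheme disjoint from $V(y_{0})$, it equals the homogenization of $J$, so the degree $d$ homogenization $\Phi\in T_{d}$ of $\psi$ lies in $(I_{Z})_{d}$; apolarity of $Z$ gives $\Phi\in F^{\perp}$, so $\Phi(F)=0$; and the monomial computation underlying Lemma~\ref{ref:localtoglobal:lem} (with $\deg\Phi=\deg F=d$, so ``degree $0$ tail'' means ``constant term'') then shows that $\psi(f)=\pi^{*}(\Phi)(f)$ has no term of degree $\le0$, i.e.\ vanishing constant term.

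Once $f\in N_{d}$, the construction is explicit. Choose $m\in M$ with $m_{\le d}=f$; we may assume $F\neq0$ (otherwise take $G=0$), so that $e:=\deg m\ge d$. Let $G\in S_{e}$ be the degree $e$ homogenization of $m$, so $\pi(G)=m$ and hence $Z_{G,l}=Z_{\pi(G)}=Z_{m}\subseteq Z$ by the observation above, and put $\Psi:=y_{0}^{\,e-d}\in T$. A direct contraction computation shows that $\Psi(G)$ is the degree $d$ homogenization of $m_{\le d}=f$, which is exactly $F$; thus $F=\Psi(G)$. Finally $Z':=Z_{G,l}$ is apolar to $F$: by Lemma~\ref{ref:containment:lem} applied to the form $G$ it is apolar to $G$, i.e.\ $I_{Z'}\subseteq G^{\perp}$, and $G^{\perp}\subseteq\bigl(\Psi(G)\bigr)^{\perp}=F^{\perp}$ because $T$ is commutative; hence $I_{Z'}\subseteq F^{\perp}$.

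I expect the main obstacle to be the claim $f\in N_{d}$ — concretely, the bookkeeping that converts apolarity of an arbitrary (possibly non-Gorenstein, non-homogeneous) local scheme $Z$ into the vanishing of the constant terms of $\psi(f)$ for $\psi\in J_{\le d}$. The delicate point is that apolarity only controls homogeneous operators annihilating $F$, while $J$ is typically inhomogeneous and of unbounded degree; it is exactly the truncation to degrees $\le d$ that lets Lemma~\ref{ref:localtoglobal:lem} apply. The remaining ingredients — well-definedness and extension of the functional, the identification of $M$ with $\operatorname{Hom}_{\KK}(A,\KK)$ matching truncation to restriction, and the explicit choice $\Psi=y_{0}^{\,e-d}$ — are routine.
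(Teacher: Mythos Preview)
Your argument is correct in substance and takes a genuinely different route from the paper. The paper invokes \cite[Proposition~2.3, Lemma~2.4]{BB} as a black box to produce a Gorenstein subscheme $Z'\subseteq Z$ apolar to $F$, then writes $Z'=V(g^{\perp})$ by Macaulay's theorem, homogenizes $g$ to a $G$ divisible by $x_{0}^{d}$, and uses the second clause of Lemma~\ref{ref:localtoglobal:lem} to deduce $(G^{\perp})_{s}=(I_{Z_{G,l}})_{s}\subseteq(F^{\perp})_{s}$ for $s\le d$, hence $G^{\perp}\subseteq F^{\perp}$. You instead bypass \cite{BB} entirely: working with the full inverse system $M=J^{\perp}\cong\operatorname{Hom}_{\KK}(A,\KK)$ of $Z$, you build directly an $m\in M$ with $m_{\le d}=f$ by extending a linear functional, the well-definedness check being exactly the place where apolarity and Lemma~\ref{ref:localtoglobal:lem} enter. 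This is more self-contained, yields the explicit operator $\Psi=y_{0}^{e-d}$, and in fact already proves the stronger conclusion of Proposition~\ref{tail} (that the tail of the dual socle generator is $f$), which the paper obtains only after further work.

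One small slip: the implication ``$F\neq 0$, so $e:=\deg m\ge d$'' is false. For instance if $F=x_{0}^{d}$ then $f=1$ and one may have $m=1\in M$ with $e=0$. The remedy is immediate: homogenize $m$ to degree $\max(e,d)$ rather than $e$ (equivalently, multiply your $G$ by $x_{0}^{d-e}$ when $e<d$) and take $\Psi=y_{0}^{\max(e,d)-d}$. With that adjustment your construction goes through unchanged. A second cosmetic point: ``$I_{Z}$ equals the homogenization of $J$'' is not literally true (one needs the $y_{0}$-saturation), but what you actually use---that a homogeneous $\Phi$ with $\pi^{*}(\Phi)\in J$ lies in $I_{Z}$---follows from $I_{Z}$ being $y_{0}$-saturated since $Z\cap V(y_{0})=\emptyset$, and is correct.
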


    \begin{proof}
        \def\pointid{\mathfrak{p}}
        By \cite[Proposition 2.2, Lemma 2.3]{BB},
        the scheme $Z$ contains a
        closed Gorenstein subscheme $Z'$ apolar to $F$.

        \def\ZGl{Z_{G, l}}
        Let ${g\in S}$ be a polynomial such that $Z' = V(g^{\perp})$
        and let $G\in \SV$ be a  homogenization of $g$ such that $G$ is
        divisible by $x_0^\degF$, where $\degF = \deg(F)$.
        Then ${g = \pi(G)}$ and ${Z' = Z_{G, l}}$. Lemma~\ref{ref:localtoglobal:lem}
        asserts that ${(G^{\perp})_s = (I_{\ZGl})_s}$ for any ${s\le\degF}$, and therefore ${(G^{\perp})_s = (I_{\ZGl})_s
        \subseteq (F^{\perp})_s}$.
        Since $F$ is of degree $\degF$, it follows that
        $G^{\perp} \subseteq F^{\perp}$, so $F$ is a partial of $G$.
    \end{proof}

We may now prove Proposition~\ref{tail}.

\begin{proof}[Proof (Proposition \ref{tail})]
By Lemma \ref{ref:reduction:lem}, we may assume that $\Gamma=Z_{G,
    l}$ and that $F=\Psi(G)$ for some homogeneous $\Psi\in \KK[y_0,\ldots,y_n]$.    Let
    $g=\pi(G)$ and $\psi=\pi^*(\Psi)$. By
    Lemma~\ref{ref:localtoglobal:lem} 
    the polynomial $f$ is the degree-$\degF$ tail of $\psi(g)$.

    Clearly ${\df\big(\psi(g)\big)\subseteq\df(g)}$, thus $Z_{\psi(g)}$ is a
    closed subscheme of $Z_{g} = \Gamma$.
By minimality of $\Gamma$ it is enough to prove that
$Z_{\psi(g)}$ is a scheme apolar to $F$.
Let $\psi'$ be such that ${\psi'\big(\psi(g)\big) = 0}$ and let ${\Psi'\in \SVdual}$ be a
homogeneous polynomial such that ${\psi' = \pi^*(\Psi')}$. Then ${\pi^*(\Psi'\Psi) = \psi'\psi}$.
By Corollary~\ref{ref:localzeromeansglobalzero:cor} we have ${\Psi'\big(\Psi(G)\big) =
0}$. Since ${F = \Psi(G)}$ we get that ${\Psi'(F) = 0}$, thus $Z_{\psi(g)}$ is apolar to
$F$.
\end{proof}

\subsection{The local cactus rank of a general cubic surface}\label{cubicsurface}

In this subsection we restrict to characteristic $0$. First we present an example of a quartic polynomial whose cubic tail has
more partials than the polynomial itself.  Similar examples play a role in our computation of the local cactus rank of a general cubic surface, the main issue in this section.

\begin{example}
    Let ${f = x_1^{\:2}x_2 + x_2^{\:2}}$ and ${g = x_1^{\:4} + f}$. Then
    $\df(f)$ has a basis
    \[
        f,\ x_1^{\:2} + x_2,\ x_1x_2,\ x_1,\ x_2,\ 1,
    \]
    thus ${\dim_\KK\df(f) = 6}$. On the other hand, $\df(g)$ is spanned by
    \[
        g,\ x_1^{\:3} + x_1x_2,\ x_1^{\:2} + x_2,\ x_1,\ 1,
    \]
    so that ${\dim_\KK \df(g) = 5}$. Notice that
   $y_2 - y_1^{\:2}\in g^{\perp},$ so that $x_2$ does not appear in the leading summand of any partial of $g$ (cf.\ Proposition \ref{coex}).
\end{example}
For our computation of the local cactus rank of a general cubic surface $V(F)$, we need to translate the generality assumptions on $F$ into properties of its partials.
First note the following algebraic--geometric correspondences for $\Psi\in
\SVdual_1$:
\begin{enumerate}
    \item $\Psi^3 (F) = 0$ if and only if $[\Psi]\in V(F) \subseteq
        \mathbb{P}(\SVdual_1)$,
    \item $\Psi^2 (F) = 0$ if and only if $[\Psi]$ is a singular point of
        the (hyper)surface $V(F)$,
    \item $\Psi (F) = 0$ if and only if $[\Psi]$ is a cone point of $V(F)$.
\end{enumerate}
\begin{lemma}\label{general}
    Let $F\in S$ be a general cubic form in four variables. Then
    \begin{enumerate}
    \item The set of $[\Psi]\in
    \PP(\SVdual_1)$ such that the quadric $\Psi (F)$ has rank less than $4$ is an irreducible surface of degree $4$.
Furthermore, the quadric $\Psi (F)$ has rank less than $4$ if and only if there exist a $\Psi'\in \SVdual_1$ such that $(\Psi\Psi') (F)=0$.
    \item There are no points $[\Psi]\in V(F)\subset
    \PP(\SVdual_1)$, such that $\Psi (F)$ is a quadric of rank less than $3$
    i.e.\ if $\Psi_3^{\:3} (F)=0$ and $(\Psi_2\Psi_3) (F)=0$, then $\Psi_2^{\:3} (F)\neq0$.
       \item The cubic surface $V(F)$ is smooth, i.e.\ $\Psi^2 (F)\neq0$ for every nonzero $\Psi\in \SVdual_1$.
   \item The cubic surface $V(F)$ has no Eckhardt points, i.e.\ no plane section is a cone.
 \end{enumerate}

 \end{lemma}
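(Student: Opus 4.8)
The plan is to treat (3), (4) and (2) as genericity statements proved by incidence-variety dimension counts in $\PP(S_3)\cong\PP^{19}$, and to reduce (1) to the classical fact that the Hessian of a general cubic surface is an irreducible quartic. I will use the dictionary: for $\psi=\sum a_iy_i\in T_1$ the quadric $\psi(F)$ has symmetric matrix $M_\psi=\bigl(\sum_k c_{ijk}a_k\bigr)_{ij}$, where $(c_{ijk})$ is the symmetric tensor of coefficients of $F$; in particular $\det M_\psi$ is, up to renaming $x_i\leftrightarrow y_i$, the classical Hessian form $\operatorname{Hess}_F=\det\bigl(y_iy_j(F)\bigr)_{ij}$. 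Since (in characteristic $\ne2$) a quadric has rank $<4$ exactly when its radical is nonzero, this matches the two descriptions in (1); and, as recorded just before the lemma, $[\psi]\in V(F)\iff\psi^3(F)=0$, while $[\psi]$ is a singular point of $V(F)\iff\psi^2(F)=0$. For each of (2), (3), (4) I form the relevant incidence $\mathcal{I}\subseteq\PP(S_3)\times B$ — $B$ being $\PP(T_1)$ (the ambient of $[\psi]$, resp.\ of a putative Eckardt point) or $\PP(T_1)^2$ — bound the dimension of the fibre over a fixed point of $B$ (all such fibres being isomorphic, by $\mathrm{PGL}_4$-equivariance), and conclude that the first projection $\mathcal{I}\to\PP(S_3)$ is not dominant; the complement of the resulting proper closed ``bad'' locus, intersected over the finitely many conditions, gives the general $F$.

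For (3): over $[\psi]=[y_0]$ the equation $\psi^2(F)=0$ reads $\sum_k c_{00k}x_k=0$, i.e.\ the four independent linear conditions $c_{00k}=0$; hence $\dim\mathcal{I}\le(19-4)+3=18<19$ and a general cubic surface is smooth. (Alternatively: the singular cubics form the discriminant hypersurface; or note the Fermat cubic $\sum x_i^{[3]}$ is smooth, since $\psi^2\bigl(\sum x_i^{[3]}\bigr)=\sum a_i^2x_i\ne0$ for $\psi\ne0$.) For (4): putting a putative Eckardt point at $p=[1:0:0:0]$, expand $F=c\,x_0^{[3]}+x_0^{[2]}L+x_0Q+C$ with $L,Q,C\in\KK[x_1,x_2,x_3]$ of degrees $1,2,3$; then $p\in V(F)\iff c=0$, $p$ is a smooth point with tangent plane $V(L)\iff$ additionally $L\ne0$, and — choosing coordinates with $L=x_1$ — the tangent-plane section is cut by $x_0\,Q(0,x_2,x_3)+C(0,x_2,x_3)$, which is a cone (three lines through $p$) exactly when $Q(0,x_2,x_3)=0$, that is $L\mid Q$: three independent conditions on $Q$, the residual binary cubic $C(0,x_2,x_3)$ then automatically splitting. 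So over fixed $p$ the Eckardt configurations form a locus of dimension $\le 19-1-3=15$, whence $\dim\mathcal{I}\le15+3=18<19$ and a general cubic surface has no Eckardt point.

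For (2): over $[\psi]=[y_0]$, $\psi^3(F)=0$ kills $c_{000}$, so $F=x_0^{[2]}L+x_0Q+C$ and $\psi(F)=x_0L+Q$; a Schur-complement computation with respect to the block involving $x_0$ (invertible since $L\ne0$) gives $\operatorname{rank}\psi(F)=2+\operatorname{rank}(Q\bmod L)$, so $\operatorname{rank}\psi(F)\le2\iff L\mid Q$, three further conditions. Together with $c_{000}=0$ this is four conditions, so $\dim\mathcal{I}\le(19-1-3)+3=18<19$, and a general $F$ has no $[\psi]\in V(F)$ with $\operatorname{rank}\psi(F)<3$. (This locus is nonempty: Fermat, with $[\psi]=[y_0-y_1]$, violates it.) The equivalent reformulation in the statement then follows after one further observes that $\psi_2\psi_3(F)=0$ with $\psi_2,\psi_3\ne0$ forces both $[\psi_2]$ and $[\psi_3]$ onto the curve $V(F)\cap\{\operatorname{Hess}_F=0\}$ (since it makes both $\psi_2(F)$ and $\psi_3(F)$ have rank $\le 3$), and checks — on an explicit smooth cubic, or via the ``conjugacy'' correspondence on that curve sending $[\psi_3]$ to the vertex of its polar quadric — that no pair with both $[\psi_2],[\psi_3]\in V(F)$ occurs for general $F$.

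Finally (1): the set in question is the quartic surface $\{\operatorname{Hess}_F=0\}\subseteq\PP(T_1)\cong\PP^3$. It is genuinely two-dimensional, i.e.\ $\operatorname{Hess}_F\not\equiv0$, because a general cubic in four variables is not a cone (Gordan--Noether), and already for Fermat $\operatorname{Hess}_F=x_0x_1x_2x_3$. For irreducibility and reducedness — equivalently, that $\operatorname{Hess}_F$ is an irreducible quartic form, so the surface has degree $4$ — note that ``irreducible and reduced quartic'' is an open condition on $\PP\bigl(H^0(\PP^3,\mathcal{O}(4))\bigr)$ and that $F\mapsto\operatorname{Hess}_F$ is a morphism out of the irreducible non-cone locus of $\PP(S_3)$; it therefore suffices to exhibit one cubic with irreducible reduced Hessian, which is the classical statement that the Hessian surface of a general cubic surface is an irreducible quartic (with ten nodes). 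I expect this last point, and the supplementary argument in (2), to be the main obstacles: the counts for (2)--(4) are essentially mechanical once the incidences and $\mathrm{PGL}_4$-homogeneity are set up, whereas (1) genuinely requires the nondegeneracy of the symmetric web $\{M_\psi\}$, cleanest to import from the classical theory of Hessians of cubic surfaces; failing that, one replaces it by a longer case analysis bounding, for each factorization type $4=1+3=2+2=1+1+2=1+1+1+1$, the dimension of the locus of cubics whose Hessian factors that way.
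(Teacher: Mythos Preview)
The paper's own proof is a one-line citation to Dolgachev's \emph{Classical Algebraic Geometry}, so there is little to compare on the level of argument; your proposal is far more detailed and, for the most part, sound.

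Your incidence-variety counts for (3), (4), and the first clause of (2) are correct, and it is worth noting—as your computations implicitly show—that the first clause of (2) and statement (4) are literally the same condition: an Eckardt point is exactly a point $[\psi]\in V(F)$ at which the polar quadric $\psi(F)$ drops to rank $\le 2$ (your computation $\operatorname{rank}\psi(F)=2+\operatorname{rank}(Q\bmod L)$ makes this explicit). Reducing (1) to the classical irreducibility of the Hessian quartic is exactly the right move, and your openness argument is clean.

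The one place to be careful is the ``i.e.'' reformulation in (2), which is genuinely stronger than the first clause (the implication only goes one way: if the reformulation holds then rank $\ge 3$, via intersecting a $\ge 2$-dimensional kernel with $V(F)$; the converse fails). Your phrasing ``forces both $[\psi_2]$ and $[\psi_3]$ onto the curve $V(F)\cap\{\operatorname{Hess}_F=0\}$'' is not quite right—$\psi_2\psi_3(F)=0$ alone only forces both points onto $\operatorname{Hess}_F$, not onto $V(F)$—but your intended reformulation (no conjugate pair under the Steinerian involution with both members on $V(F)$) is correct. Here the naive incidence count over $([\psi_2],[\psi_3])=([y_0],[y_1])$ gives exactly $6$ independent linear conditions, so $\dim\mathcal I=13+6=19=\dim\PP(S_3)$, and the count alone does not decide dominance. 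Your proposed resolutions—exhibit an explicit smooth cubic with no such pair, or invoke the classical analysis of the Steinerian involution on $\operatorname{Hess}_F$—are both viable, and indeed this is precisely the kind of fact the paper is outsourcing to Dolgachev \S9.4.
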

\begin{proof} These facts are classical. For a good recent reference see
    \cite[9.4]{Dol}.\end{proof}

\begin{proposition} \label{cubic surface} Let $F\in \SV_3$ be a general smooth cubic form in four variables.
Then the local cactus rank of  $F$ is $7$.
For every linear form $l\in \SV_1$, the apolar scheme of the dehomogenization $\pi_{[l]}(F)$ has length $8$, while
if $\{l=0\}$ defines a singular curve section of $V(F)\subset\PP(\SVdual_1)$ whose tangent cone at the singular point is a square,
 then there is a length $7$ scheme supported at $[l]\in \PP(\SV_1)$ that is apolar to $F$.
\end{proposition}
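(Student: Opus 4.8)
The plan is to turn everything into a statement about $\dim_\KK\df(g)$ via Proposition~\ref{tail}. Fix $l$ and choose coordinates so $l=x_0$; write $f_l=\pi(F)\in\SW=\KK[x_1,x_2,x_3]$, so $f_l=F_0+G_2+G_1+c$ where $F_0=F(0,x_1,x_2,x_3)$ is the plane cubic cut on $V(F)$ by $\{l=0\}$ and $G_2,G_1,c$ are its lower homogeneous parts. If $g\in\SW$ satisfies $g_{\le 3}=f_l$ and $\psi\in T$ is homogeneous of degree $s\le\degF=3$ with $\psi(g)=0$, then, since every homogeneous component of $g_{>3}$ has degree $\ge 4$, the identity $0=\psi(g)=\psi(f_l)+\psi(g_{>3})$ separates into $\psi(f_l)=0$ and $\psi(g_{>3})=0$; hence $(g^{\perp})_s\subseteq (f_l^{\perp})_s$ for $s\le 3$, and as in the proof of Proposition~\ref{tail} this says $Z_g$ is apolar to $F$. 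Conversely Proposition~\ref{tail} makes a minimal local apolar scheme at $[l]$ of the form $Z_g$ with $g_{\le 3}=f_l$. So the minimal length of a local apolar scheme to $F$ supported at $[l]$ equals $\min\{\dim_\KK\df(g)\mid g_{\le 3}=f_l\}$, and the local cactus rank of $F$ is the minimum of this over all $l$.

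First I would compute $\dim_\KK\df(f_l)=8$ for every $l$. Since $F$ is general, Lemma~\ref{general}(4) and the fact that a plane section of a smooth cubic surface is never three concurrent lines show $F_0$ is never a cone, i.e.\ the linear forms $y_iy_j(F_0)$ span $\langle x_1,x_2,x_3\rangle$ and the quadrics $y_1(F_0),y_2(F_0),y_3(F_0)$ are linearly independent. Hence $H_{f_l}(1)=3$ (the order-two linear partials of $f_l$ already span all of $\langle x_1,x_2,x_3\rangle$) and $H_{f_l}(2)=3$ (the leading terms of the order-one partials $y_i(f_l)$ are the independent quadrics $y_i(F_0)$), while $H_{f_l}(0)=H_{f_l}(3)=1$, so $\dim_\KK\df(f_l)=1+3+3+1=8$.

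Next, the lower bound: for any $l$ and any $g$ with $g_{\le 3}=f_l$, $\dim_\KK\df(g)\ge 7$. If $\degF g=3$ then $g=f_l$ and $\dim_\KK\df(g)=8$; otherwise $e:=\degF g\ge 4$, and since every value of $H_g$ on $\{0,\dots,e\}$ is $\ge 1$, a total $\le 6$ forces $e\in\{4,5\}$, leaving, after imposing Macaulay growth and the existence of a valid symmetric decomposition, only $H_g\in\{(1,1,1,1,1),(1,2,1,1,1)\}$ for $e=4$ and $H_g=(1,1,1,1,1,1)$ for $e=5$. In every one of these cases $n_0=\Delta_{g,0}(1)=1$, so after a coordinate change only $x_1$ occurs in the leading terms of the partials of $g$; applying Corollary~\ref{falphabeta} and tracking which monomials of degree $3$ can be produced (each new variable $x_k$ contributes to degree $3$ only through $x_k\cdot(\text{a degree-}2\text{ partial of }f_{(\le i-1)})$, and $\ord\phi_k\ge 2$ forces a loss of $2$ at every step), one finds that $g_3=F_0$ must be divisible by $x_1$ — indeed $F_0\in x_1^2\cdot\langle x_1,x_2,x_3\rangle$ when $e=4$ — hence $F_0$ is degenerate, contradicting the previous paragraph for the irreducible plane sections and, for the reducible ones, a supplementary parameter count (the family of such $g$ is too small to realize the corresponding $f_l$). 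Together with $\dim_\KK\df(f_l)=8$ this gives local cactus rank $\ge 7$.

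Finally, the upper bound when $\{l=0\}$ is a general cuspidal section. Choose coordinates with $l=x_0$ and $F_0=x_1x_3^2-x_2^3$, the cusp and cuspidal tangent being in general position relative to $[l]$. I would look for $g$ of degree $6$ with $H_g=(1,1,1,1,1,1,1)$; by Lemma~\ref{blindexoticdescription} such a $g$ is $g=\sum_{j_2,j_3\ge 0}x_2^{j_2}x_3^{j_3}\big(\phi_2^{j_2}\phi_3^{j_3}\big)(h)$ for some $h\in\KK[x_1]$ of degree $6$ and $\phi_2,\phi_3\in\KK[y_1]$ of order $\ge 2$, and then $\dim_\KK\df(g)=\dim_\KK\df(h)=7$ with $Z_g$ curvilinear. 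Expanding the degree $3,2,1,0$ parts of such a $g$ in terms of the coefficients of $h$ and $\phi_2,\phi_3$ (allowing a final correction of degree $\le 1$, absorbed into $\phi_2,\phi_3$ as in the proof of Corollary~\ref{falphabeta}), the requirement $g_{\le 3}=f_l$ becomes a system of polynomial equations; the crucial point is that the $x_2^3$-term of $F_0$ is produced as $x_2^3\cdot\phi_2^3(h)$ with $\ord\phi_2=2$ and $\degF h=6$ — which cannot happen for $e\le 5$ — and that the system is then solvable exactly when $F_0$ is a cuspidal cubic with cusp in general position. Since $g_{\le 3}=f_l$, $Z_g$ is apolar to $F$, giving local cactus rank $\le 7$; together with the lower bound it equals $7$, and $Z_{f_l}$ still has length $8$ by the second paragraph.

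The main obstacle is the upper bound: carrying out the explicit exotic expansion and showing the matching conditions $g_{\le 3}=f_l$ are solvable precisely for the cuspidal $l$, which amounts to identifying the locus of $l$ with a length-$7$ curvilinear apolar scheme at $[l]$ with the locus of cuspidal plane sections of $V(F)$. The lower bound, by contrast, is a finite case analysis whose only real content is the implication ``$\dim_\KK\df(g)\le 6\Rightarrow g_3$ divisible by $x_1$'' supplied by Corollary~\ref{falphabeta}, plus a cheap dimension estimate to dispose of the reducible sections.
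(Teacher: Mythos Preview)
Your setup (paragraph~1) and the computation $H_{F_l}=(1,3,3,1)$ (paragraph~2) are fine and match the paper's in spirit; the paper argues the latter via Lemma~\ref{general}(1)--(3) rather than~(4), but your route through ``no plane section is a cone'' is equally valid.

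The lower bound is close to the paper's case analysis, but your treatment of $H_g=(1,1,1,1,1,1)$ is too loose. You deduce only that $g_3$ is divisible by $x_1$, hence $F_0$ is reducible, and then wave at a ``supplementary parameter count''. The paper instead uses \emph{smoothness}: since $V(F)$ is smooth, both $x_2^2$ and $x_3^2$ divide some monomial of $g$, so $y_2(g)$ and $y_3(g)$ are partials of order $\geq 2$ whose leading summands would both have to be proportional to $x_1^3$, forcing a dependence that contradicts $F_0$ not being a cone. That is a one-line argument, and it actually closes the case; yours does not.

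The real gap is the upper bound. Your witness is a degree-$6$ curvilinear $g$ with $H_g=(1,1,1,1,1,1,1)$ and $h\in\KK[x_1]$. With your coordinates $F_0=x_1x_3^2-x_2^3$, expand $g_3$ from Lemma~\ref{blindexoticdescription}: the $(x_2,x_3)^3$-part of $g_3$ is forced to be the perfect cube $a_6(b_2x_2+c_2x_3)^3$, so matching $F_0(0,x_2,x_3)=-x_2^3$ gives $c_2=0$; but then the coefficient of $x_1x_3^2$ in $g_3$ is $c_2^2a_5+2c_2c_3a_6=0$, while $F_0$ requires it to be $1$. So with $x_1$ as the distinguished variable your ansatz is \emph{inconsistent at the cubic level}, independent of $G_2,G_1,c$. (Taking the cuspidal-tangent direction as the distinguished variable removes this particular obstruction, but then the system is still only a dimension count, not a proof of solvability.)

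The paper's construction is quite different and avoids all of this. It first puts $F$ in the form $F=f+x_1^2x_3+x_0x_3^2$ with $y_3(f)=0$, sets $G=x_3F+x_1^4$, and takes $g=G_{x_0}=x_1^4+F_{x_0}$, a \emph{degree-$4$} polynomial. The point is then to exhibit an order-one annihilator: one finds $\sigma\in\KK[y_1,y_2,y_3]_2$ with $\sigma(x_1^2)=0$ such that $(y_3-y_1^2+\sigma)(g)$ is a constant, whence $H_g=(1,n,m,1,1)$ with $n\le 2$ and (by the symmetric decomposition) $m\le n$, so $\dim_\KK\df(g)\le 7$. This gives the length-$7$ apolar scheme directly, with Hilbert function $(1,2,2,1,1)$ rather than your $(1,1,1,1,1,1,1)$.
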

\begin{proof}
Let $F\in \SV = \KK_{\divp}[x_0, x_1, x_2, x_3]$ be a general cubic form in the
    sense of Lemma~\ref{general}.

    We claim that for all non-zero $l\in \SV_1$,  the Hilbert function
    \begin{equation}\label{dimensionpartialsF_l}
   H_{F_l}= (1,3,3,1).
    \end{equation}
    Indeed, suppose it is not so, then ${H_{F_l}\le (1,3,2,1)}$ for some non-zero $l\in
    \SV_1$ and there exists a non-zero linear form $\Psi\in
        l^{\perp}$, such that $\psi  (F_l)$ has degree at most one,
        where $\psi = \pi^*(\Psi)$.
    Let ${F_l = f_3 + f_2 + f_1 + f_0}$ be the decomposition into
    homogeneous components, then ${F = f_3 + lf_2 + l^2f_1 + l^3f_0}$. Since
    ${\deg\psi (F_l) \le1}$ we have $\Psi (f_3) = 0$. But then $\Psi (F)$ is
    divisible by $l$,  so it is a quadric of rank at most $2$.  On the other hand ${\Psi(l)=0}$, so
    ${\Psi^2\big(\Psi(F)\big)=\Psi^3 (F)=0}$, so ${[\Psi]\in V(F)\subset \PP(\SVdual_1)}$. This contradicts the
    generality assumption Lemma~\ref{general}.2 of $F$.

    The cubic surface $V(F)$ has a one dimensional family of plane cuspidal cubic sections, and finitely many reducible plane sections that are unions of a smooth conic and a tangent line.  In either case, the tangent cone at the singular point is a square. We pick one such plane section.
    After a linear change of coordinates, we may assume that this plane section is $V(F,x_0)$, that it is singular at $V(x_0,x_1,x_2)$ and that  $V(x_0,x_1^{\:2})$ is the tangent cone, so that
the dehomogenization $\pi_{[x_0]}(F)$ has the form
    \[
   \pi_{[x_0]}(F)=f+x_1^{\:2}x_3+x_3^{\:2}+x_3l
    \]
    where $f$ and $l$ are polynomials in $\KK_{\divp}[x_1,x_2]$ of degree three and one respectively.
    The plane section $V(F,x_0)=V(f_3+x_1^{\:2}x_3,x_0)$ where $f_3$ is the
    cubic summand of $f$. By Lemma~\ref{general}.4, the linear partials of
    $f_3 + x_1^2x_3$ fill $\langle x_1, x_2, x_3\rangle$. For later use we
    note that $y_1^2$ is the only monomial quadric such that $y_1^2(f_3 + x_1^2x_3)$
    contains $x_3$. Hence we have ${(y_2, y_3)\cdot (y_1, y_2, y_3) (f)} =
    \langle 1, x_1,
    x_2\rangle$.
    Since $l - y_1^2(f)\in
    \KK_{\divp}[x_1, x_2]$ we can find ${\sigma_0\in (y_2, y_3)(y_1, y_2,
    y_3)}$ such that
    $\sigma_0 (f) = l - y_1^2(f) \mod \langle x_1\rangle$. Then there also exists
$\sigma = \sigma_0 + ay_1^3\in (y_1, y_2, y_3)^2$ such that $\sigma(x_1^4 + f)
= l - y_1^2(f)$. Clearly $\sigma_0 x_1^4 = 0$.

  Let $G=x_1^{\:4}+x_0F$. Then ${y_0  (G)=F}$, hence $G^{\perp} \subseteq F^{\perp}$.
   By Lemma~\ref{ref:containment:lem} we
    have $I(\ZGl) \subseteq G^{\perp}$, so we conclude that $I(\ZGl) \subseteq F^{\perp}$, i.e.\ that the local Gorenstein scheme $\ZGl$ is apolar to $F$.
    We claim that $\mathrm{length}(\ZGl) \leq 7$, and hence that the local cactus rank of $F$ is at most~$7$.

   We prove the claim, by showing that $x_1^{\:2}x_3+x_3^{\:2}+x_3l$ is an exotic summand for $G_{x_0}:=\pi_{[x_0]}(G)=x_1^{\:4}+f+x_1^{\:2}x_3+x_3^{\:2}+x_3l$.
  For this we consider the partials
  \[
  y_3(G_{x_0})=x_1^{\:2}+x_3+l, \quad y_1^{\:2}(G_{x_0})=x_1^{\:2}+x_3+y_1^{\:2}(f).
  \]
  We have $(y_3 - y_1^2)(G_{x_0}) = l - y_1^2(f) = \sigma(G_{x_0})$,
  so $y_3-y_1^{\:2}-\sigma$ annihilates $G_{x_0}$. If we take
  $\psi=y_1^{\:2}+\sigma$, then $\psi^2 G_{x_0} = y_1^4(G_{x_0}) = 1$ and we may write
  \[
  G_{x_0}=x_1^{\:4}+f+x_3\psi(x_1^{\:4}+f)+x_3^{\:2}\psi^2(x_1^{\:4}+f)
  \]
  which shows, by Proposition~\ref{ref:dualautomorphism:prop}, that  
  $x_1^{\:2}x_3+x_3^{\:2}+x_3l$ is an exotic summand for $G_{x_0}$.  Thus $G_{x_0}$ has the Hilbert function of the binary polynomial $x_1^{\:4}+f$.
 The maximal values of this function is clearly  $(1,2,2,1,1)$, so
  this proves the claim that $\mathrm{length}(\ZGl) \leq 7$.

    Finally, suppose that there exists a local Gorenstein scheme of length at most $6$
    apolar to $F$.  It must be defined by some polynomial $g$ whose cubic tail coincides
    with $F_l$ for some $l$.
    Thus $g$ has degree at least four, and its Hilbert function is $H_g = (1, 2,1,1,1)$, $H_g= (1, 1,1,1,1)$ or $H_g = (1,
    1,1,1,1,1)$.   In the first two cases,  $g$ has  degree $4$ and the leading
    summand of $g$ is a pure power $l^{\:4}$.  Each order\h{one} partial of the cubic summand
    $F_{l,3}$ of $F_l$ is therefore proportional to $l^{\:2}$.  In particular
    $\pi^*(\Psi) (F_{l,3})=0$ for some $\Psi\in l^{\bot}\subset \SVdual_1$, so $\Psi (F)$ is divisible by $l$  contradicting, as above,
    the generality assumption on $F$.

    In the case $H_g = (1, 1, 1, 1, 1, 1)$ we see that $F_l$ is the
        degree-at-most-three part of $g$ and the standard form of $g$ is
        $x^5$. By Example~\ref{ex:11111}, we have a $4\cdot 3$-dimensional choice of
    $F_l$. Together with the choice of $l$, we obtain a $16$-dimensional
variety of possible $F$, thus such $F$ is not general.
\def\oldconclusion{
    In the last case $g$ has degree $5$.
    Again its leading summand is a pure power, say $x_1^{\:5}$, and the
    leading summand of any partial of $g$ is a pure power of $x_1$.
    Furthermore  since $f$ is a cubic whose cubic summand is not a cone, it
    must involve two variables $x_2$ and $x_3$ that are hidden variables  for
    $g$.  The cubic surface is furthermore smooth, so both $x_2^{\:2}$ and $x_3^{\:2}$
    must divide some monomial in $g$.    But, by
    \red{(??)}, then
    both $y_2 (g)$ and $y_3 (g)$ must be partials of order two of $g$
    with independent leading summands. 
\pmmtodo{Also here I think some result of section
    \ref{standardformspresentation} should be used, but the results that are
there now do not state this directly.}
    Since the leading summand of any partial
    of order two is proportional to $x_1^{\:3}$, this  is a contradiction.}
\end{proof}

\section{On the dimension of the cactus varieties  of cubic forms}\label{cactusdim}

In this section we consider polynomials with Hilbert function ${(1,m-1,m-1,1)}$ and\linebreak  ${(1,m-1,m-1,1,1)}$ and derive lower bounds on the dimension of the Cactus variety of cubic forms $\Cactus_{2m}(\Ver{3,n})$ and $\Cactus_{2m+1}(\Ver{3,n})$, respectively.

The cactus variety $\Cactus_{r}(\Ver{3,n})$ of the third Veronese embedding
$\Ver{3,n} \subset\PP(\SV_3)$ is, according to Proposition \ref{tail}, the closure of the family of cubic forms $[F]$
admitting a decomposition  $F=F_1+\cdots+F_s$ and distinct linear forms
$l_1,\ldots,l_s\in \SV_1$ and forms  $G_1,\ldots,G_s\in S$, such that
$Z_{G_1,l_1}\cup\cdots\cup Z_{G_s,l_s}$ has length at most $r$, and the
dehomogenization $f_i=\pi_{[l_i]}(F)$ of $F$ at $l_i$ is the cubic tail of the dehomogenization
of $G_i$ at $l_i$ (see Definition \ref{ref:naturalapolar:def}).

To get a lower bound on the dimension of the cactus variety,  we consider the extreme opposite to the higher secants, namely linear spaces that intersect $\Ver{3,n}$ in a local scheme. In particular we consider the closure 
\[
W_{2m}(\Ver{3,n})\subset \Cactus_{r}(\Ver{3,n}) \subset\PP(\SV_3)
\]
 of the family of cubic forms $[F]$, for which there exist a linear form $l\in \SV_1$ and a form  $G\in S$, such that $f=\pi_{[l]}(F)$ is the cubic tail of  $g=\pi_{[l]}(G)$ and the polynomial $g$ 
 has Hilbert function
$(1,m-1,m-1,1)$. We define $W_{2m+1}(\Ver{3,n})$ analogously, using the
Hilbert function $(1,m-1,m-1,1,1)$.  In the first case $g$ 
is itself a cubic polynomial, i.e. its own cubic tail, while in the second case, $g$ 
is a quartic polynomial.

\def\tail#1#2{\operatorname{Tails}_{#1}\left( #2 \right)}%
To find the dimension of $W_{r}(\Ver{3,n})$ 
 when $r=2m$, we note that it is the union over ${l\in \SV_1}$ of varieties isomorphic to the projectivisation of $V_r(n)$, the family of cubic polynomials
$f\in\KK[x_1,\dots,x_n]$ with Hilbert function $(1,m-1,m-1,1)$.
When $r=2m+1$, the variety  $W_{r}(\Ver{3,n})$ is union over $l$ of varieties isomorphic to the projectivisation of $V_r(n)$ of $\tail{r}{3,n}$, the family of cubic polynomials
$f\in\KK[x_1,\ldots,x_n]$, that are tails of polynomials $g\in\KK[x_1,\ldots
,x_n]$ with Hilbert function 
$(1,m-1,m-1,1,1)$.

\begin{example}\label{even} 
If ${f\in\KK_{\divp}[x_1,\ldots,x_n]}$ has Hilbert function ${H_f= (1, m-1, m-1, 1)}$, its only possible symmetric decomposition is
\[
H_f= \Delta = \Delta_0 = (1, m-1, m-1, 1),
\]
and therefore ${\deg f=3}$,  ${\Delta_0(1)=m-1}$ and
\[
(n_0,n_1)=(m-1,m-1).
\]
If $f_\alpha$ is a general cubic polynomial in $\KK_{\divp}[x_1,\ldots,x_{m-1}]$, and $f_\beta$ is any quadratic polynomial in  $\langle x_m,\ldots,x_n\rangle \langle x_1,\ldots,x_{m-1},1\rangle$, then $f_\alpha$ is in standard form and $f_\beta$ is an exotic summand for $f_\alpha+f_\beta$ (cf.\ Definition \ref{exotic}).  
Furthermore  $H_{f_\alpha+f_\beta}=\Delta$ and so $f_\alpha+f_\beta\in
V_{2m}(n)$. 
The subspace 
\[
\langle x_1,\ldots,x_{m-1}\rangle=\linear{f}{0}
\]
 is determined by $f$,
 and the variety of subspaces $\langle
x_1,\ldots,x_{m-1}\rangle\subset\langle x_1,\ldots,x_{n}\rangle$ has dimension
$(n-m+1)(m-1)$, so we get
\[
\dim V_{2m}(n)= \binom{m + 2}{3} +(2m-1)(n-m+1).
\]
Notice furthermore that the affine variety $V_{2m}(n)$ is a cone, so that its projectivisation has dimension one less.
\end{example}
\begin{example}\label{odd} If ${g\in\KK_{\divp}[x_1,\ldots,x_n]}$ has Hilbert function ${H_f= (1, m-1, m-1, 1, 1)}$, its only possible symmetric decomposition is 
\[
\Delta =(\Delta_0,\Delta_1)  = \big( (1, 1, 1, 1, 1), (0, m-2, m-2, 0) \big),
\]
and therefore ${\deg g=4}$,  ${\Delta_0(1)=1}$, ${\Delta_1(1)=m-2}$ and
\[
(n_0,n_1,n_2)=(1,m-1,m-1).
\]
If $f_\alpha$ is a general cubic polynomial in $\KK_{\divp}[x_1,\ldots,x_{m-1}]$, and $f_\beta$ is a cubic polynomial of the form ${l_0x_1^{\:2}+l_0^{\:2}+f_\infty}$, where $l_0\in \langle x_m,\ldots,x_n\rangle$ and  $f_\infty\in \langle x_m,\ldots,x_n\rangle \langle x_1,\ldots,x_{m-1},1\rangle$, then $x_1^{\:4}+f_\alpha$  is in standard form and $f_\beta$ is an exotic summand for 
\[
g=x_1^{\:4}+f_\alpha+f_\beta=x_1^{\:4}+f_\alpha+ l_0x_1^{\:2}+l_0^{\:2}+f_\infty.
\] Furthermore 
\[
H_{g}=\Delta_0+\Delta_1=(1, m-1, m-1, 1, 1),
\] 
and so $f_\alpha+f_\beta\in \tail{2m+1}{3,n}$.  

The flag of subspaces
\[
\langle x_1\rangle=\linear{g}{0}\subset \langle
x_1,\ldots,x_{m-1}\rangle=\linear{g}{1}\subset\langle x_1,\ldots,x_{n}\rangle
\]
is determined by $g$, and  the variety of such flags  has dimension
$m-2+(n-m+1)(m-1)$,  so we get 
 \[
 \dim \tail{2m+1}{3,n}= \binom{m + 2}{3} +(2m-1)(n-m+1)+n-1.
 \]
 Notice that, since the summand $l_0x_1^2+l_0^2$ is quadratic in the form $l_0$, the affine variety $\tail{2m+1}{3,n}$ is not a cone, so its projectivisation has the same dimension.
\end{example}

We use the Examples \ref{even} and \ref{odd} to give a lower bound on the dimension of the union of linear spaces that intersect $\Ver {3,n}$ in a local subscheme.
\begin{proposition}\label{localdim}
Let $3\leq m\leq n$.  The union $\Cactus^L_{2m}(\Ver{3,n})$ of linear spans in  $
\PP^{\binom{n+3}{3}-1}$ of local subschemes in $\Ver{3,n}$ of length $2m$ has
dimension
\[
\dim \Cactus^L_{2m}(\Ver{3,n})\geq \binom{m+2}{3}+2m(n-m)+3m-2.
\]
Let $4\leq m\leq n$.  The union $\Cactus^L_{2m+1}(\Ver{3,n})$ of linear spans in  $ \PP^{\binom{n+3}{3}-1}$ of local subschemes in $\Ver{3,n}$ of length $2m+1$ has dimension
\[
\dim \Cactus^L_{2m+1}(\Ver{3,n})\geq \binom{m+2}{3}+2m(n-m)+3m+n-2.
\]
\end{proposition}
\begin{proof}  Clearly $W_{2m}(\Ver{3,n})\subset \Cactus^L_{2m}(\Ver{3,n})$ and $W_{2m+1}(\Ver{3,n})\subset \Cactus^L_{2m+1}(\Ver{3,n})$, so we get the inequality by computing the dimension of these subvarieties. 

 Let $m>2$.  $W_{2m}(\Ver{3,n})$ is the union as $l$ varies, of projective varieties whose affine cones are isomorphic to $V_{2m}(n)$, so  $W_{2m}(\Ver{3,n})$ has dimension 
\[
\dim W_{2m}(\Ver{3,n})\leq \dim V_{2m}(n)-1+n
\]
equal to the right hand side in the lemma.
Similarly, $W_{2m+1}(\Ver{3,n})$ is the union, as $l$ varies, of varieties isomorphic to $\tail{2m+1}{3,n}$, so 
\[
\dim W_{2m+1}(\Ver{3,n})\leq \dim \tail{2m+1}{3,n}+n.
\]
In both cases the right hand side is the dimension of the given parametrization of the variety $W_{r}(\Ver{3,n})$.
To get equality, we show that the parameterization is generically one to one.

When $r$ is even, we show that  for a
general  $[F]\in W_{r}(\Ver{3,n})$ there is a unique $l$ such that $Z_{F,l}$ has length $r$.  When $r$ is odd, we show that  there is a unique $l$ such that $f=\pi_l(F)$ is the tail of a quartic polynomial $g_l$ whose apolar scheme
$Z_{g_l}$ has length $r$. 

Let  $r=2m\leq 2n$, and assume that  $[F]\in W_{r}(\Ver{3,n})$ is general.  Let $l\in \SV_1$ and $f=\pi_l(F)$ be the local polynomial of $F$ at $l$ such that $f$ has Hilbert function $(1,m-1,m-1,1)$. Then $F=F_3+lF_2$ where $F_3$ depends on $m-1$ variables.  
Therefore $V(F,l)$ is a cone inside the hyperplane $V(l)$. Let $[y]$ be a
point in the $n-m$ dimensional linear vertex of $V(F,l)$.  Then  $y(F_3)=y(l)=0$.
Furthermore, since all partials of $F_l$ of degree $1$ are partials of $F_3$,
we have $y^2(F_2)=0$.
In particular, $y^2(F)=0$ and $y(F)=l\cdot l'$, so $V(F)$ is singular at $[y]$ with a tangent cone of rank $2$.
On the other hand, if $V(F)$ is singular at $[y]$ with a tangent cone of rank $2$, then 
\[
F=F_3+lF_2+l\cdot l'x,
\]
 where $y(x)=1$, $y(F_3)=y(F_2)=y(l)=y(l')=0$ and $F_3$ has Hilbert function  $(1,m-1,m-1,1)$ for some $m\leq n$.

If $r=2m+1$, and assume
that  $[F]\in W_{r}(\Ver{3,n})$ is general. Let $l\in \SV_1$ and $f=\pi_l(F)$ be the local polynomial of $F$ at $l$ such that $f$  has Hilbert function $(1,m,m,1)$, but is the cubic tail of a quartic polynomial $g$ with Hilbert function $(1,m-1,m-1,1,1)$.  When $m\leq n$, then 
\[
g=\alpha_0 l_0^4+g_3+\alpha_1 l_0^2l'+\alpha_2 (l')^2+g_2
\]
 where $g_3$ and $l_0$ depends on $m$ variables and $l'$ is a hidden variable for $g$.
Thus 
\[
f_3=g_3+\alpha_1 l_0^2l', \quad F=g_3+\alpha_1 l_0^2l'+\alpha_2 l(l')^2+lg_2
\]
and $V(F,l)$ is a singular cubic hypersurface with a double point whose tangent cone is a square.  In fact, if $m<n$, then
$V(F,l)$ is a cone with linear vertex of dimension $n-m-1$ over such a singular hypersurface.

In both cases, if $m<n$, let $l_1,\ldots,l_{n-m}$ are general linear forms.  If $r=2m$, then the linear section $V(F, l_1,\ldots,l_{n-m})$ still has a singular point whose tangent cone has rank $2$.
 If $r=2m+1$, then the linear section  $V(F_3,l, l_1,\ldots,l_{n-m})$ is still a singular cubic hypersurface inside a  $(m-1)$-dimensional linear subspace with a non reduced tangent cone at the singular point.    
 The proof of uniqueness of $l$ may therefore in both cases be reduced to the case, when $n=m$.

The following is a classical result.
\begin{lemma}\label{eckhardt} The set of singular cubic hypersurfaces in $\PP^n, n>2$ whose tangent cone at the singular point has rank at most $2$, form a subvariety of  codimension $\binom{n-1}{2}+1$ in $\PP^{\binom{n+3}{3}-1}$, and the general member in the set has exactly one singular point.
\end{lemma}
\begin{proof}
It suffices to note that the set of singular cubic hypersurfaces in $\PP^n$
form a hypersurface in $\PP^{\binom{n+3}{3}-1}$. The general point in this
hypersurface, the discriminant, corresponds to a cubic hypersurface with a quadratic singularity, i.e.\ the tangent
cone is a quadric of rank $n$.  In the space of  quadrics of rank at most $n$,
the quadrics of rank $2$ form a subvariety of codimension $\binom{n-1}{2}$.
The two codimensions add up to the codimension in the lemma.  For uniqueness it suffices to fix a quadric $q$ of rank $2$  and a point $p$ in its vertex,and notice, by Bertini's theorem, that the general cubic hypersurface through for which $q$ is the tangent cone at $p$ is smooth elsewhere.
\end{proof}
 \begin{remark}  Notice that the codimension in lemma is consistent with the dimensions of $W_{2n}(\Ver{3,n})$.
 When $n=m$ in the proposition, we get
 \[
\dim W_{2n,n}(\Ver{3,n})=\binom{n+3}{3}-1-\binom{n-1}{2}-1.
 \]
  \end{remark}

For the case $r$ is odd, we show that
 \begin{lemma} \label{tangentsquare} The set of cubic hypersurfaces in $\PP^n, n>3$ with a singular hyperplane section whose tangent cone at the singular point is a square,  form a subvariety of  codimension $\binom{n-2}{2}-1$ in $\PP^{\binom{n+3}{3}-1}$, and when $n>4$, the general member in the set has exactly one such hyperplane section.
 \end{lemma}

\begin{proof}
Assume $V(F)$ is a general cubic of dimension $n-1>2$ with a singular hyperplane section $V(F,l)$ whose tangent cone at the singular point is a square.   Let $p\in V(F,l)$ be a singular point and $V(l,(l')^2)$ the tangent cone in $V(l)$.   We may choose coordinates $x_0,\ldots,x_{n}$, so that $l=x_0,l'=x_1$ and $p=[0:\ldots:0:1]$.  Then
 \[
 F=F_3+x_1^2x_{n}+x_{0}F_2,
 \]
  where $F_3\in \KK_{\divp}[x_1,\ldots,x_{n-1}]$ and $F_3+x_1^2x_{n}$ is the cubic form defining the singular hyperplane section  $V(F,x_0)$ and $F_2\in \KK_{\divp}[x_0,\ldots,x_{n}] $.
    Thus $F$ depends on $\binom{n+1}{3}+1+\binom{n+2}{2}$ coefficients.
 Now, $p$, $l$ and $l'$ varies in a $(n+(n-1)+(n-2))$ dimensional variety, so we get that cubics with a singular hyperplane section whose tangent cone at the singular point is a square form a variety of codimension
 \[
 \binom{n+3}{3}-\left(\binom{n+1}{3}+1+\binom{n+2}{2}\right)-3(n-1)=\binom{n+1}{2}-1-3(n-1)=\binom{n-2}{2}-1
 \]
When $n>4$, this codimension is positive.  

The forms $F$, when $F_3$ and $F_2$ vary, define a linear system of cubic hypersurfaces with base locus supported at  
$p=[0:\ldots:0:1]$.  The general member is smooth, and the tangent hyperplane section at $p$ is singular only at $p$, and the tangent cone at $p$ is a square.  If this hyperplane section is not unique with this property, there is another point $q$ distinct from $p$ such that the tangent hyperplane section at $q$ also has this property.  To count dimensions, we  fix two flags $p\in L_p\subset H_p$ and $q\in L_q\subset H_q$, and consider the space of smooth cubic hypersurfaces through $p$ and $q$, whose tangent hyperplanes are $H_p$ and $H_q$ and whose tangent cones at $p$ and $q$ are squares with support  along $L_p$ and $L_q$ respectively.   Notice that $H_p$ and $H_q$ are distinct, while $L_p\cap H_q$ may equal $L_q\cap H_p$.   This gives two cases for the dimension count. These are both similar to the dimension count above and  show that the variety of  cubic hypersurfaces with two special points as above, has positive codimension in the variety of cubics with only one such point when ${n>4}$. Therefore the last statement of the lemma follows.
\end{proof}

\begin{remark}  Notice that codimension in the lemma is consistent with the dimension of $W_{2n+1}(\Ver{3,n})$. 
When ${n=m}$ in the proposition, we get
\[
\dim W_{2n+1,n}(\Ver{3,n})=\binom{n+3}{3}-1-\binom{n-2}{2}+1.
\]
\end{remark}

We conclude that the parameterization of $W_{r}(\Ver{3,n})$ is birational for any even $r$ with\linebreak ${5<r<2n+1}$, and any odd $r$  with $8<r\leq 2n+1$, and hence that  the dimension formulas of Proposition~\ref{localdim} are the dimensions of  $W_{2m}(\Ver{3,n})$ and  $W_{2m+1}(\Ver{3,n})$ respectively.
 \end{proof}

 We rewrite the formulas for the dimensions of  $W_{2m}(\Ver{3,n})$ and  $W_{2m+1}(\Ver{3,n})$ in terms of the lengths $r=2m$ (resp. $r=2m+1$):
\[
\dim W_{r}(\Ver{3,n})=\begin{cases}
(rn+r-1)+\frac{r(r-2)(r-16)}{48}-1 &\text{if } 5<r<2n+1, r\; \text{even},\\
(rn+r-1)+\frac{(r-1)(r-3)(r-17)}{48}-2&\text{if } 8<r<2n+2, r \; \text{odd}.
\end{cases}
\]

\begin{corollary}\label{TheCorollary}
When $18\leq r\leq 2n+2$, then
\[
\dim  \Cactus_{r}(\Ver{3,n})\geq
\begin{cases}
(rn+r-1)+\frac{r(r-2)(r-16)}{48}-1 &\text{if } r\geq 18\; \text{even},\\
(rn+r-1)+\frac{(r-1)(r-3)(r-17)}{48}-2&\text{if } r\geq 19\; \text{odd}.\\
\end{cases}
\]
\end{corollary}

For each possible Hilbert function for local schemes of length $r$, one may define a variety analogous to $W_r(\Ver{3,n})$.  The dimensions of these varieties are in general not known, and this remains an obstacle to finding a precise dimension for the cactus variety  $\Cactus_{r}(\Ver{3,n})$.

\medskip

Finally we leave an open question: we know that the cactus rank of a general cubic surface equals the rank, which is $5$, while the local cactus rank is $7$ (see Proposition~\ref{cubic surface}), but we do not know whether for a larger number of variables the local cactus rank and the cactus rank agree.

\begin{question}
Is the cactus rank of a cubic form in $\KK_\divp[x_0,\ldots,x_n]$ always computed locally, when ${n\geq 8}$ and the cactus rank is at least $18$? 
\end{question}

\section*{Acknowledgements}
JJ and KR thank Jaros{\l}aw Buczy\'{n}ski for fruitful discussions and the Homing Plus
programme of Foundation for Polish Science, co-financed from European Union, Regional Development Fund for partial support of their mutual visits. PMM thanks Anthony
Iarrobino and Jerzy Weyman for the invitation and hospitality at Northeastern
University, and is grateful to Anthony Iarrobino for having introduced him to
this subject and for very fruitful discussions. He also thanks James Adler for help with language.
AB was partially supported by Project Galaad of INRIA Sophia Antipolis M\'editerran\'ee, France,
Marie Curie Intra-European Fellowships for Carrer Development (FP7-PEOPLE-2009-IEF):
``DECONSTRUCT'', GNSAGA of INDAM, Mathematical Department Giuseppe Peano of Turin, Italy, and Politecnico of Turin, Italy.
JJ is a doctoral fellow at the Warsaw Center of Mathematics and Computer
Science financed by the Polish program KNOW and by Polish National Science Center, project
2014/13/N/ST1/02640 and a member of
``Computational complexity, generalised Waring type problems and tensor
decompositions'' project within ``Canaletto'',  the executive program for
scientific and technological cooperation between Italy and Poland, 2013-2015.
PMM was partially supported by Funda\c{c}\~{a}o para a Ci\^{e}ncia
e Tecnologia, projects ``Geometria Alg\'{e}brica em Portugal'',
PTDC/MAT/099275/2008, ``Comunidade Portuguesa de Geometria Alg\'{e}brica'', PTDC/MAT-GEO/0675/2012, and sabbatical leave grant SFRH/BSAB/1392/2013, by CIMA -- Centro de Investiga\c{c}\~{a}o  em Matem\'{a}tica e Aplica\c{c}\~{o}es, Universidade de \'{E}vora, projects
PEst-OE/MAT/UI0117/2011 and PEst-OE/MAT/UI0117/2014, and by Funda\c{c}\~{a}o de Amparo \`{a} Pesquisa do Estado de S\~{a}o Paulo, grant 2014/12558--9.
KR was supported by the RCN project no 239015 ``Special Geometries''.


\begin{thebibliography}{}
	
\bibitem[Alexander, Hirschowitz 1995]{AH}
Alexander, James, Hirschowitz, Andr\'e:
Polynomial interpolation in several variables.
\emph{J. of Alg.\ Geom.}, \textbf{4} (1995), no.\ 2, pp.\ 201--222.

\bibitem[Bernardi, Brachat, Mourrain 2014]{BBM}
Bernardi, Alessandra, Brachat, J\'er$\mathrm{\hat{o}}$me, Mourrain, Bernard: 
A comparison of different notions of ranks of symmetric tensors,
 Linear Algebra and its Applications, \textbf{460}, 2014, pp.\ 205--230.

\bibitem[Bernardi, Ranestad 2012]{BR}
Bernardi, Alessandra, Ranestad, Kristian:
On the cactus rank of cubics forms,
\emph{J. Symbolic Comput.} \textbf{50} (2012), pp.\ 291--297.

\bibitem [Buczy\'{n}ska, Buczy\'{n}ski 2014]{BB}
Buczy\'{n}ska, Weronika, Buczy\'{n}ski, Jaroslaw: Secant varieties to high degree Veronese reembeddings, catalecticant matrices and smoothable Gorenstein schemes, J. Algebraic Geom.\ \textbf{23} (2014), no.\ 1, pp.\ 63--90. 

\bibitem[Casnati, Notari 2011]{CN11}
Casnati, Gianfranco, Notari, Roberto:
On the irreducibility and the singularities of the Gorenstain locus of the punctual Hilbert scheme of degree 10,
\emph{J. Pure Appl.\ Algebra} \textbf{215} (2011), no.\ 6, pp.\ 1243--1254.

\bibitem[Casnati, Notari 2013]{CN13}
Casnati, Gianfranco, Notari, Roberto:
A structure theorem for $2$-stretched Gorenstein algebras,
arXiv:1312.2191, to apear in \emph{J.\ Commut.\ Algebra}.

\bibitem[Casnati, Jelisiejew, Notari 2015]{CJN}
Casnati, Gianfranco, Jelisiejew, Joachim, Notari, Roberto:
Irreducibility of the Gorenstein loci of Hilbert schemes via ray families,
\emph{Algebra Number Theory} \textbf{9} (2015), no.\ 7, pp.\ 1525--1570.

\bibitem [Dolgachev 2012]{Dol}
Dolgachev, Igor V.:  Classical algebraic geometry: a modern view. Cambridge University Press, (2012).
	
\bibitem [Emsalem 1978]{E} Emsalem, Jacques:
G\'{e}om\'{e}trie des points \'{e}pais,
\emph{Bull.\ Soc.\ Math.\ France} \textbf{106}, (1978), no.\ 4, pp.\ 399--416.

\bibitem [Iarrobino 1994]{I} Iarrobino, Anthony:
Associated graded algebra of a Gorenstein Artin Algebra,
\emph{Mem.\ Amer.\ Math.\ Soc.} \textbf{107}, (1994), no.\ 514, Amer.\ Math.\ Soc.\ Providence.

\bibitem [Iarrobino, Kanev 1999]{IK}
Iarrobino, Anthony, Kanev, Vassil:
Power Sums, Gorentein Algebras and Determinantal Loci,
\emph{Lecture Notes in Mathematics} \textbf{1721}, Springer-Verlag, Berlin Heidelberg New York (1999).

\bibitem [Jelisiejew 2015]{J} Jelisiejew, Joachim:
Classifying local Artinian Gorenstein algebras,
arXiv:1511.08007.

\bibitem [Macaulay 1927]{M}
Macaulay, Francis Sowerby:
Some properties of enumeration in the theory of modular systems,
\emph{Proc.\ London Math.\ Soc.} \textbf{S2-26} (1927), no.\ 1, pp.\ 531--555.

\end{thebibliography}
\end{document}